\renewcommand{\geq}{\geqslant}
\renewcommand{\leq}{\leqslant}
\theoremstyle{plain}
\newtheorem{theorem}{Theorem}
\newtheorem{lemma}[theorem]{Lemma}
\newtheorem{corollary}[theorem]{Corollary}
\newtheorem{proposition}[theorem]{Proposition}
\theoremstyle{definition}
\newtheorem{definition}[theorem]{Definition}
\newtheorem{example}[theorem]{Example}
\newtheorem{construction}[theorem]{Construction}
\theoremstyle{remark}
\newcommand{\h}{i_H}
\newcommand{\MU}{\check{\mu}}
\newcommand{\junk}[1]{}
\newcommand{\set}[1]{\left\{#1\right\}}
\newcommand{\abs}[1]{\lvert {#1} \rvert }
\newcommand{\smin}{\setminus}
\newcommand{\rv}{\overleftarrow}
\renewcommand{\phi}{\varphi}
\newcommand{\calN}{\mathscr{N}}
\newcommand{\calM}{\mathscr{M}}
\newcommand{\rot}[2] {\begin{turn}{#1} #2 \end{turn}}
\newcommand{ \ch}{\color{black}}
\title{The Hamiltonian problem and $t$-path traceable graphs}
\author{Kashif Bari\\
\small Department of Mathematics and Statistics\\[-0.8ex]
\small San Diego State University\\[-0.8ex]
\small San Diego, California\\
\small\tt kashbari@math.tamu.edu\\
\and
Michael E. O'Sullivan \\
\small Department of Mathematics and Statistics\\[-0.8ex]
\small San Diego State University\\[-0.8ex]
\small San Diego, California\\
\small\tt mosullivan@mail.sdsu.edu
}
\date{}
\begin{document}
\maketitle

\begin{abstract}
The problem of characterizing maximal non-Hamiltonian graphs may be
naturally extended to characterizing graphs that are maximal with
respect to non-traceability and beyond that to $t$-path traceability.
We show how traceability behaves with respect to
disjoint union of graphs and the join with a complete graph.  Our main
result is  a decomposition theorem that reduces the problem of characterizing maximal
$t$-path traceable graphs to characterizing those that have no universal vertex.
We generalize a construction of maximal non-traceable graphs by Zelinka to $t$-path traceable graphs.

\end{abstract}

\section{Introduction}
The motivating problem for this article is the characterization of maximal non-Hamiltonian (MNH)
graphs.   
Skupien and co-authors give the first broad family of MNH graphs in~\cite{SkupienMNH}
and describe all MNH graphs with 10 or fewer vertices in ~\cite{SkupienCat}.  The latter paper also includes
three constructions---types $A1$, $A2$, $A3$---with a similar structure.  
Zelinka gave two constructions of graphs that are maximal non-traceable; that is, they have
no Hamiltonian path, but the addition of any edge gives a Hamiltonian path.
The join of such a graph with a single vertex gives a MNH graph.  Zelinka's first family
produces, under the join with $K_1$, the  Skupien MNH graphs from
\cite{SkupienMNH}.
Zelinka's second family is a broad generalization of the type $A1$,
$A2$, and $A3$ graphs of \cite{SkupienCat}.
Bullock et al~\cite{Bullock} provide further examples of  infinite families of maximal
non-traceable graphs. 

In this article we work with two closely related invariants of a graph~$G$, $\MU(G)$ and $\mu(G)$.
The $\mu$-invariant, introduced by Ore \cite{Ore}, is the maximal number of paths in $G$ required to cover
the vertex set of $G$.  We show that $\MU(G)= \mu(G)$ unless $G$ is Hamiltonian, when $\MU(G)=0$.
Maximal non-Hamiltonian graphs are maximal with respect to $\MU(G)=1$, and maximal non-traceable
graphs are maximal with respect to $\MU(G)=2$.  It is useful to broaden the perspective
to study, for arbitrary $t$,  graphs that are  maximal with respect to
$\MU(G) = t$, which we call $t$-path traceable graphs.  

In Section~\ref{s:traceability} we show how the $\MU$ and $\mu$ invariants behave with respect to
disjoint union of graphs and the join with a complete graph.  Section~\ref{s:decomposition} derives
the main result, a decomposition theorem that reduces the problem of characterizing maximal
$t$-path traceable to characterizing those that have no universal vertex,
which we call trim.  Section~\ref{s:family} presents a
generalization of the Zelinka construction to $t$-path traceable graphs.

\section{Traceability and Hamiltonicity}
\label{s:traceability}


It will be notationally convenient to say that the complete graphs $K_1$ and $K_2$ are Hamiltonian.  
As justification for this view, consider an undirected graph as
a directed graph with each edge having a conjugate edge in the reverse
direction.  This perspective does not affect the Hamiltonicity of a
graph with more than 3 vertices, but it does give  $K_2$ a Hamiltonian cycle.   
Similarly, adding loops to any graph with more than~2 vertices
does not alter the Hamiltonicity of the graph, but $K_1$, with
an added loop, has a Hamiltonian cycle.

Let $G$ be a graph.  
A vertex, $v \in V(G)$ , is called a {\em universal vertex} if $\deg(v) = |V(G)|-1$.
Let $\overline{G}$ denote the {\em graph complement} of $G$, having
vertex set $V(G)$ and edge set $E(K_n) \smin E(G)$.
We will use the disjoint union of two graphs, $G \sqcup H$  and the
join of two graphs  $G\ast H$.  The latter  is  $G \sqcup H $ together with
the  edges $\set{vw \vert  v \in V(G) \text{ and } w \in V(H)}$.

\begin{definition}
 A set of $s$ disjoint paths in a graph $G$ that includes every
vertex in $G$ is a {\em $s$-path covering} of $G$.  Define the following invariants.

 $\displaystyle  \mu(G) := \min_{s\in \mathbb{N}} \{ \exists s \text{-path covering of } G \}$.  

$\displaystyle \MU(G) := \min_{l\in \mathbb{N}_0} \{ K_l  \ast G \text{ is Hamiltonian } \}$

$\displaystyle \h(G) := 
\begin{cases}
       1 &  \text{ if } G \text{ is Hamiltonian}\\
       0 & \text{ otherwise} 
\end{cases}$

We will say $G$ is {\em $t$-path traceable} when $\mu(G) = t$.  
A set of $t$ disjoint paths that cover a $t$-path traceable graph $G$ is a {\em minimal path  covering}.  
\end{definition}

Note that $K_r*(K_s*G) = K_{r+s}*G$.  
If $G$ is  Hamiltonian then so is $K_r*G$ for $r\geq 0$. (In particular
this is true for $G = K_1$ and $G=K_2$.)

We now have a series of lemmas that lead to the main result of this section, which is a formula showing how
the $\mu$-invariant and $\MU$-invariant behave with respect to disjoint union and the join with a
complete graph.

\begin{lemma}
\label{l:alpha}
$\displaystyle \MU(G) = \min_{l\in \mathbb{N}_0} \{ \overline{K_l}  \ast G \text{ is Hamiltonian }\}$
\end{lemma}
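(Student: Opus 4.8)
The plan is to prove the two inequalities separately, using throughout that for each $l$ the graph $\overline{K_l}\ast G$ is a spanning subgraph of $K_l\ast G$, obtained by deleting exactly the edges among the $l$ added vertices. For the inequality $\MU(G)\le$ (right-hand side), note that a Hamiltonian cycle of a graph remains a Hamiltonian cycle after adding edges. Hence whenever $\overline{K_l}\ast G$ is Hamiltonian, so is $K_l\ast G$; every $l$ admissible on the right is admissible in the definition of $\MU(G)$, and taking minima gives $\MU(G)\le$ (right-hand side).

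For the reverse inequality I would set $a=\MU(G)$, fix a Hamiltonian cycle $C$ of $K_a\ast G$, and let $U$ be the set of $a$ added vertices. Travelling around $C$, the members of $U$ split the remaining vertices into $a$ arcs, one per cyclic gap; an arc is empty precisely when $C$ traverses an edge joining two members of $U$. Each non-empty arc uses only edges of $C$ between vertices of $G$, and those are exactly the edges of $G$ (the join contributes only $U$--$G$ and $U$--$U$ edges), so each non-empty arc is a path of $G$, and together these cover $V(G)$. Writing $r$ for the number of non-empty arcs, we obtain $r\le a$ vertex-disjoint paths $P_1,\dots,P_r$ covering $G$. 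I then introduce $r$ pairwise non-adjacent vertices $w_1,\dots,w_r$, each joined to all of $V(G)$, and form the cycle $w_1P_1w_2P_2\cdots w_rP_rw_1$. Every edge used is either an edge of some $P_i$ or joins a $w_i$ to a path endpoint (present since each $w_i$ is universal to $G$), and no $w_i w_j$ edge is used; thus this is a Hamiltonian cycle of $\overline{K_r}\ast G$. Consequently the right-hand side is at most $r\le a=\MU(G)$, and combined with the first paragraph this yields equality.

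The step I expect to be the main obstacle is not the reassembly itself but the degenerate cases, where the conventions fixed earlier must be invoked carefully. The crucial observation is that an empty arc corresponds exactly to a forbidden $U$--$U$ edge of $C$, and re-indexing by $r$ rather than $a$ is precisely what discards these; one must check $r\le a$ holds and that no non-empty arc is lost. The smallest cases also need attention: when $r=1$ and $P_1$ is a single vertex, the assembled object is the $2$-vertex ``cycle'' $K_2$, which is Hamiltonian only by the stated convention, so I would flag this to confirm the argument remains valid for graphs such as $G=K_1$. Once these boundary situations are dispatched by appeal to the earlier conventions, the two inequalities combine to give the lemma.
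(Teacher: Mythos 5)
Your proof is correct and follows essentially the same route as the paper's: both establish the easy inequality from the containment of $\overline{K_l}\ast G$ in $K_l\ast G$, and both handle the reverse inequality by decomposing a Hamiltonian cycle of $K_a\ast G$ (for $a=\MU(G)$) into paths of $G$ separated by the added vertices and observing that consecutive added vertices are redundant. The only cosmetic difference is that the paper argues by contradiction with the minimality of $a$ that no two added vertices are consecutive on the cycle (so the very same cycle lies in $\overline{K_a}\ast G$), whereas you directly reassemble a cycle in $\overline{K_r}\ast G$ with $r\le a$ and then invoke the first inequality to force equality.
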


\begin{proof}
Since $\overline{K_l}\ast G$ is a subgraph of $K_l \ast G$,
a Hamiltonian cycle in  $\overline{K_l}\ast G$ would also be one in $K_l \ast G$.

Let $\MU(G)= a$.   Suppose   $C$ is a  Hamiltonian cycle in $K_a \ast G$ and write
$C$ as $v \sim P_1 \sim Q_1 \sim \ldots \sim P_s \sim Q_s \sim v$, where $v$ is a vertex
in $G$ and the paths $P_i \in G$ and $Q_i \in K_a$. If any
$Q_i$ contains 2 vertices or more, say $u$ and $w_1, \ldots, w_k$ with
$k \geq 1$, then we may simply remove all the vertices, except $u$,
and end up with a Hamiltonian graph on $K_{a-k}$. This
contradicts the minimality of $a=\MU(G)$.
Therefore, $C$ must not contain any paths of length greater than two in the subgraph
$K_{a}$, and  any Hamiltonian cycle on $K_a \ast G$ is  also a
Hamiltonian cycle on $\overline{K_a} \ast G$.
\end{proof}

\begin{lemma}
{\label{l:ami}}
$\MU(G) = \mu(G) -\h(G)$
\end{lemma}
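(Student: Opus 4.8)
The plan is to show the two inequalities $\MU(G) \le \mu(G) - \h(G)$ and $\MU(G) \ge \mu(G) - \h(G)$ by translating between path coverings of $G$ and Hamiltonian cycles of joins $\overline{K_l} \ast G$, using Lemma~\ref{l:alpha} to replace $K_l \ast G$ by $\overline{K_l} \ast G$ throughout. The key observation is that the $l$ independent vertices of $\overline{K_l}$ act as ``connectors'': a Hamiltonian cycle of $\overline{K_l}\ast G$ visits these $l$ vertices in some cyclic order, and deleting them breaks the cycle into exactly $l$ arcs lying entirely in $G$, each of which is a path (possibly a single vertex). Conversely, $s$ disjoint paths covering $G$ can be stitched into a Hamiltonian cycle by inserting one new connector vertex between consecutive path-endpoints.

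First I would treat the non-Hamiltonian case, where $\h(G)=0$ and the claim is $\MU(G)=\mu(G)$. For $\MU(G) \le \mu(G)$: given a minimal covering of $G$ by $s=\mu(G)$ disjoint paths, I would introduce $s$ new independent vertices $\overline{K_s}$ and build a Hamiltonian cycle by alternating path, connector, path, connector, and so on, closing up; since each connector is adjacent to every vertex of $G$ in the join, the endpoints always connect, so $\overline{K_s}\ast G$ is Hamiltonian and hence $\MU(G)\le s$. For the reverse $\MU(G)\ge \mu(G)$: taking $l=\MU(G)$ and a Hamiltonian cycle $C$ in $\overline{K_l}\ast G$, I would delete the $l$ vertices of $\overline{K_l}$; because these vertices form an independent set, no two are consecutive on $C$ (for $l\ge 2$), so their removal leaves exactly $l$ vertex-disjoint paths covering $G$, giving $\mu(G)\le l$. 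This matches the structure already exploited in the proof of Lemma~\ref{l:alpha}, where a Hamiltonian cycle is written as alternating blocks $P_i$ in $G$ and single connector vertices.

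The main obstacle, and the reason the $\h(G)$ correction term appears, is the boundary behavior at small $l$, specifically the Hamiltonian case and the degenerate counts $l=0$ and $l=1$. When $G$ is Hamiltonian we have $\MU(G)=0$ by definition (taking $l=0$, since $K_0\ast G = G$), whereas $\mu(G)=1$ because a Hamiltonian cycle yields a single spanning path; thus $\MU(G)=\mu(G)-1=\mu(G)-\h(G)$, and I must verify separately that $\MU(G)=0$ forces $G$ Hamiltonian and that otherwise $\MU(G)\ge 1$. The delicate point in the general argument is the claim that deleting $l$ independent vertices from a Hamiltonian cycle yields exactly $l$ nonempty paths: when $l=1$ the single connector's removal leaves one spanning path (consistent with $\mu(G)=1$ and $G$ non-Hamiltonian), and I must check that no arc between consecutive connectors is empty, which holds precisely because the connectors are mutually non-adjacent. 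Handling these edge cases cleanly—and confirming that the minimizing $l$ in $\MU$ and the minimizing $s$ in $\mu$ correspond correctly under the $\pm\h(G)$ shift—is where the real care is needed, while the bidirectional construction itself is routine.
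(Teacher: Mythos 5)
Your proposal is correct and follows essentially the same route as the paper: build a Hamiltonian cycle of the join by alternating the $\mu(G)$ covering paths with connector vertices to get $\MU(G)\le\mu(G)$, delete the connectors from a Hamiltonian cycle of $K_a\ast G$ to get $\mu(G)\le\MU(G)$, and handle the Hamiltonian case separately where $\MU(G)=0$ and $\mu(G)=1$. Your extra care about \emph{exactly} $l$ arcs via Lemma~\ref{l:alpha} is harmless but unnecessary, since the lower bound on $\MU$ only needs ``at most $a$ paths,'' which is what the paper uses.
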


\begin{proof}
If $G$ is Hamiltonian (including $P_1$ and $P_2$) then $\MU(G)=0$,
$\mu(G)= 1$ so the equality holds.  Suppose $G$ is non-Hamiltonian
with $\mu(G)= t$ and $t$-path covering  $P_1, \dots, P_t$.
Let $K_t$ have vertices $u_1, \dots, u_t$.  In the graph
$K_t*G$, there is a Hamiltonian cycle: $v_1\sim P_1\sim v_2\sim P_2
\sim\cdots\sim v_t \sim P_t\sim v_1$.  Thus $\MU(G) \leq t = \mu(G)$.

Let $\MU(G)= a$, so there is a Hamiltonian cycle in $K_a*G$.  Removing
the vertices of $K_a$ breaks the cycle into at most $a$ disjoint
paths covering $G$.  Thus $\mu(G) \leq \MU(G)$.
\end{proof}

\begin{lemma}
\label{l:disjoint}
$\mu(G \sqcup H) = \mu(G) + \mu(H)$ 
and $\MU(G \sqcup H) = \MU(G)+ \MU(H) 
+ \h(G)+\h(H)$.
\end{lemma}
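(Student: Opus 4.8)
The plan is to prove the $\mu$-identity directly from the definition and then obtain the $\MU$-identity as a formal consequence of Lemma~\ref{l:ami}. The whole argument rests on one structural observation: since $G \sqcup H$ has no edges between $V(G)$ and $V(H)$, every path in $G \sqcup H$ lies entirely within $G$ or entirely within $H$.

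For the first equation I would argue by two inequalities. For the upper bound, I would take a minimal path covering of $G$ together with a minimal path covering of $H$; their union is a path covering of $G \sqcup H$ using $\mu(G) + \mu(H)$ paths, so $\mu(G \sqcup H) \leq \mu(G) + \mu(H)$. For the lower bound, I would start from a minimal path covering of $G \sqcup H$ and partition its paths according to the component each lies in, which is legitimate by the observation above. The paths contained in $G$ cover all of $V(G)$, so there are at least $\mu(G)$ of them; likewise at least $\mu(H)$ of them lie in $H$. Summing gives $\mu(G \sqcup H) \geq \mu(G) + \mu(H)$, and combined with the upper bound this yields equality.

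Next I would determine $\h(G \sqcup H)$. Assuming $G$ and $H$ are both nonempty, $G \sqcup H$ is disconnected and has at least two vertices, and a disconnected graph on more than one vertex admits no Hamiltonian cycle. This remains true under the loop/conjugate-edge convention adopted earlier, since that convention only alters the status of $K_1$ and $K_2$. Hence $\h(G \sqcup H) = 0$.

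Finally I would assemble the $\MU$-identity by applying Lemma~\ref{l:ami} three times. First, $\MU(G \sqcup H) = \mu(G \sqcup H) - \h(G \sqcup H) = \mu(G) + \mu(H)$, using the first equation and $\h(G \sqcup H) = 0$. Then rewriting $\mu(G) = \MU(G) + \h(G)$ and $\mu(H) = \MU(H) + \h(H)$ gives $\MU(G \sqcup H) = \MU(G) + \MU(H) + \h(G) + \h(H)$, as required. The computation is essentially bookkeeping, and I do not expect a real obstacle; the only points needing care are the path-disjointness observation driving the $\mu$-identity and the verification that the disjoint union is never Hamiltonian, which is precisely what forces the correction term $\h(G) + \h(H)$ to appear in the $\MU$-formula.
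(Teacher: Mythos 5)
Your proposal is correct and follows essentially the same route as the paper: both prove the $\mu$-identity by combining minimal coverings for the upper bound and partitioning the paths of a minimal covering of $G \sqcup H$ by component for the lower bound, then deduce the $\MU$-identity from Lemma~\ref{l:ami} together with the fact that $G \sqcup H$ is not Hamiltonian. Your write-up is if anything slightly more explicit than the paper's (spelling out why the disjoint union cannot be Hamiltonian even under the $K_1$, $K_2$ convention), but there is no substantive difference.
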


\begin{proof}
A path covering of $G$  may be combined with a path covering of $H$ to create one for 
$G \sqcup H$.  Conversely, paths in a $t$-path covering of $G\sqcup H$ can be partitioned
into those contained in $G$ and those contained in $H$, giving a path covering of $G$ and
one of $H$. Consequently 
\[\mu(G\sqcup H) = \mu(G)+ \mu(H)\]

Since $G\sqcup H$ is not Hamiltonian we have   
\begin{align*}
\MU(G \sqcup H) &= \mu(G \sqcup H) + \h(G \sqcup H)\\
& = \mu(G) +   \mu(H) \\
& = \MU(G) + \h(G) + \MU(H) + \h(H)
\end{align*}

\end{proof}

\begin{lemma}{\label{l:star}}
For any graph $G$, 
\begin{align*}
\mu(K_s \ast G) &= \max \{1, \mu(G) -s \}\\
\MU(K_s \ast G) &= \max \{0, \MU(G) -s \}
\end{align*}
In particular, if $K_s*G$ is Hamiltonian then $\mu(K_s*G) = 1$ and $\MU(K_s*G) = 0$;
otherwise,  $\mu(K_s \ast G) =\mu(G) -s $ and $\MU(K_s \ast G) = \MU(G) -s$.
\end{lemma}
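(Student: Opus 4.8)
The plan is to derive both formulas from the results already in hand, treating $\MU$ first since it is the more structural of the two, and then reading off $\mu$ via Lemma~\ref{l:ami}.

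First I would handle $\MU(K_s \ast G)$ using the associativity identity $K_r\ast(K_s\ast G) = K_{r+s}\ast G$ noted above. Unwinding the definition of $\MU$ gives
\[
\MU(K_s \ast G) = \min_{l \in \mathbb{N}_0}\{K_l \ast (K_s \ast G)\text{ is Hamiltonian}\} = \min_{l\in\mathbb{N}_0}\{K_{l+s}\ast G \text{ is Hamiltonian}\}.
\]
The crucial supporting fact is a monotonicity statement: since joining a Hamiltonian graph with a complete graph keeps it Hamiltonian (the remark preceding this section), whenever $K_m\ast G$ is Hamiltonian so is $K_{m'}\ast G = K_{m'-m}\ast(K_m\ast G)$ for every $m'\geq m$. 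Thus $\{m : K_m \ast G \text{ is Hamiltonian}\}$ is the up-set $\{\MU(G), \MU(G)+1, \dots\}$, and the minimal admissible $l$ is the least $l\geq 0$ with $l+s\geq \MU(G)$, namely $\max\{0,\MU(G)-s\}$. This yields the second formula.

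Next I would obtain $\mu(K_s \ast G)$ by applying Lemma~\ref{l:ami} in the form $\mu(K_s\ast G) = \MU(K_s\ast G) + \h(K_s\ast G)$ and splitting on whether $K_s\ast G$ is Hamiltonian. Note that $\MU(K_s\ast G)=0$ exactly when $K_s\ast G$ is Hamiltonian, i.e. when $s\geq\MU(G)$. In case $s\geq\MU(G)$ we get $\MU(K_s\ast G)=0$ and $\h(K_s\ast G)=1$, so $\mu(K_s\ast G)=1$; in case $s<\MU(G)$ we get $\h(K_s\ast G)=0$ and $\mu(K_s\ast G)=\MU(G)-s$.

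The final step is to rewrite these in the stated $\max$-form and reconcile the $\h$ correction. The observation to record is that if $G$ itself is Hamiltonian then so is $K_s\ast G$, so in the case $s<\MU(G)$ we must have $\h(G)=0$ and hence $\MU(G)=\mu(G)$ by Lemma~\ref{l:ami}; thus $\mu(K_s\ast G)=\mu(G)-s$, and since $s<\mu(G)$ this equals $\max\{1,\mu(G)-s\}$. In the case $s\geq\MU(G)=\mu(G)-\h(G)$, we have $\mu(G)-s\leq\h(G)\leq 1$, so $\max\{1,\mu(G)-s\}=1$ matches as well. I do not expect a serious obstacle; the one delicate point is the $\h$ bookkeeping at the threshold $s=\MU(G)$, where the $+\h(K_s\ast G)$ term and the switch in the $\max$ must be shown to line up, which the Hamiltonicity-preservation observation settles. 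The ``in particular'' clause is then simply the explicit restatement of the two cases.
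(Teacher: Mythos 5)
Your proof is correct and follows essentially the same route as the paper: the $\MU$ formula via the identity $K_r\ast(K_s\ast G)=K_{r+s}\ast G$ together with the observation that joining with a complete graph preserves Hamiltonicity, and the $\mu$ formula derived from the $\MU$ formula through Lemma~\ref{l:ami}, which is precisely the derivation the paper states is possible (your careful handling of the $\h$ bookkeeping at the threshold $s=\MU(G)$ fills in what the paper leaves implicit). The only difference is that the paper additionally supplies a direct constructive proof of the $\mu$ formula by reducing to the case $s=1$ and splicing paths through the new vertex, which you omit; your derivation makes that unnecessary.
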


\begin{proof}
The formula for $\MU$ is immediate when $G$ is Hamiltonian since we have observed that this forces
$K_s*G$ to be Hamiltonian.  Otherwise, it follows from $K_r*(K_s*G)= K_{r+s}*G$: 
if $\MU(G)= a$, then $K_r*(K_s*G) $ is Hamiltonian if and only if $r+s\geq a$.

The formula for $\mu$ may be derived from the result for $\MU$ using Lemma~\ref{l:ami}.
We may also prove it directly.  Observe that it is enough to prove $\mu(K_1*G)= \max\{1,\mu(G)-1\}$.
Let $u$ be the vertex of $K_1$.  Let $\mu(G)= t$ and $P_1, \dots, P_t$ a $t$-path covering of $G$.
If $t=1$ then $u$ can be connected to the initial vertex of $P_1$ to create a 1-path covering of $K_1*G$.
For $t\geq 2$, the path $P_1\sim u \sim P_2$ along with $P_3, \dots, P_t$ gives a $(t-1)$-path covering of
$K_1*G$.  Thus for $t>1$,  $\mu(K_1*G) \leq t-1$.  
Suppose  $Q_1, \dots, Q_d$  were a  minimal $d$-path covering of $K_1*G$, with $u$ a vertex of $Q_1$. 
Removing $u$ gives at most a $(d+1)$-path covering of $G$.  Thus $\mu(K_1*G) +1 \geq t$.  
This shows $\mu(K_1*G)=\mu(G)-1$ for $\mu(G) \geq 2$.
\end{proof}

The main result of this section is the following two formulas for for
the $\mu$ and $\MU$ invariants for the disjoint union of graphs, and
the join with a complete graph.
\begin{proposition}{\label{p:lemmy}}
Let $\displaystyle \{G_j\}_{j=1}^m$ be graphs. 

$\displaystyle \mu\big(\bigsqcup_{j=1}^m G_j\big) =\sum_{j=1}^m\mu(G_j)$ and
$\displaystyle \MU\big(\bigsqcup_{j=1}^m G_j\big) =\sum_{j=1}^m\MU(G_j) + \sum_{j=1}^m\h(G_j)$.

Furthermore, $\displaystyle \MU \big( (\bigsqcup_{j=1}^m G_j)~\ast~K_r \big) =\max \big\{ 0 ,
\sum_{j=1}^m\MU(G_j) + \sum_{j=1}^m\h(G_j) -r \big\}$.
\end{proposition}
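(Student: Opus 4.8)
The plan is to derive all three formulas by reducing to the two-graph lemmas already established, proceeding by induction on $m$ and then substituting. First I would prove the $\mu$-formula $\mu(\bigsqcup_{j=1}^m G_j) = \sum_{j=1}^m \mu(G_j)$ by induction on $m$. The base case $m=1$ is immediate. For the inductive step I would write $\bigsqcup_{j=1}^m G_j = (\bigsqcup_{j=1}^{m-1} G_j) \sqcup G_m$ and apply the $\mu$-part of Lemma~\ref{l:disjoint} to peel off $G_m$, then invoke the induction hypothesis on the remaining union. Since the $\mu$-statement of Lemma~\ref{l:disjoint} carries no Hamiltonicity correction term, this induction is clean and purely additive.

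Next I would obtain the $\MU$-formula from the $\mu$-formula using Lemma~\ref{l:ami}, which gives $\MU = \mu - \h$. The key observation is that for $m \geq 2$ the graph $\bigsqcup_{j=1}^m G_j$ is disconnected (each summand being nonempty), hence non-Hamiltonian, so $\h(\bigsqcup_{j=1}^m G_j) = 0$. Therefore $\MU(\bigsqcup_{j=1}^m G_j) = \mu(\bigsqcup_{j=1}^m G_j) = \sum_{j=1}^m \mu(G_j) = \sum_{j=1}^m (\MU(G_j) + \h(G_j))$, which rearranges into the claimed expression $\sum_{j=1}^m \MU(G_j) + \sum_{j=1}^m \h(G_j)$.

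Finally, the third formula is immediate from Lemma~\ref{l:star} together with the $\MU$-formula just proved. Because the join is commutative, $(\bigsqcup_{j=1}^m G_j) \ast K_r = K_r \ast (\bigsqcup_{j=1}^m G_j)$, so setting $s = r$ and $G = \bigsqcup_{j=1}^m G_j$ in Lemma~\ref{l:star} yields $\MU\big((\bigsqcup_{j=1}^m G_j) \ast K_r\big) = \max\{0, \MU(\bigsqcup_{j=1}^m G_j) - r\}$; substituting the value of $\MU(\bigsqcup_{j=1}^m G_j)$ from the previous step gives the stated maximum.

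The arguments are essentially bookkeeping, so I do not anticipate a serious obstacle; the only point requiring genuine care is the Hamiltonicity accounting. Specifically, one must note that the correction term $\h$ vanishes for the full union (when $m \geq 2$) even though it does not vanish for the individual summands, which is exactly why the $\sum_{j} \h(G_j)$ term survives in the $\MU$-formula. Routing the induction through $\mu$ first, rather than inducting on $\MU$ directly, keeps these $\h$-terms from cluttering the inductive step, since the $\mu$-version of Lemma~\ref{l:disjoint} has no such term to propagate.
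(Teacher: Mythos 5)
Your proposal is correct and follows essentially the same route as the paper's own proof: induction on $m$ via Lemma~\ref{l:disjoint} for the $\mu$-formula, then Lemma~\ref{l:ami} with the observation that a disjoint union is non-Hamiltonian to get the $\MU$-formula, and finally Lemma~\ref{l:star} for the join with $K_r$. Your explicit remark that $\h$ of the full union vanishes only for $m\geq 2$ is in fact slightly more careful than the paper's treatment.
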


\begin{proof}
We proceed by induction. The base case $k=2$ is exactly Lemma~\ref{l:disjoint}.
Assume the formula holds for $k$ graphs we will prove it for $k+1$ graphs.
\begin{align*}
\mu\big(\bigsqcup_{j=1}^{k+1} G_j\big)   &= \mu\big(  (\bigsqcup_{j=1}^{k+1} G_j) \sqcup G_{k+1} \big) \\
&= \mu\big(\bigsqcup_{j=1}^{k} G_j\big)  + \mu\big( G_{k+1} \big) \\
 &= \sum_{j=1}^k \mu(G_j)  + \mu\big( G_{k+1} \big) \\
&=\sum_{j=1}^{k+1} \mu(G_j)
\end{align*}

By Lemma~\ref{l:ami} and the fact that disjoint graphs are not Hamiltonian, we have,

\begin{align*}
\MU\big(\bigsqcup_{j=1}^m G_j\big)    &= \mu\big(\bigsqcup_{j=1}^{m} G_j\big) + \h\big(\bigsqcup_{j=1}^{m} G_j\big) \\
&= \sum_{j=1}^m \mu(G_j) +0 \\
&=\sum_{j=1}^{m} (\MU(G_j) + \h(G_j)) \\
&=\sum_{j=1}^m\MU(G_j) + \sum_{j=1}^m\h(G_j)
\end{align*}

Therefore, we have by Lemma~\ref{l:star},

\begin{align*}
\MU\big( (\bigsqcup_{j=1}^{m} G_j) \ast K_r\big) &= \max \{ 0, \MU\big(\bigsqcup_{j=1}^{m} G_j\big)  - r \} \\
&= \max \{ 0, \sum_{j=1}^{m} \MU(G_j) + \sum_{j=1}^{m} \h(G_j) -r \} \\
\end{align*}

\end{proof}

The following lemma will be useful in the next section.  
To express it succintly we introduce the following Boolean condition.
For a graph $G$ and vertex $v\in G$, $T(v,G)$ is true if and only if $v$ is a terminal vertex in some minimal path
covering of $G$.

\begin{lemma}
Let $v \in G$ and $w \in H$.
\begin{align*}
\mu \Big( ( G \sqcup H)  + vw  \Big) = 
\begin{cases}
\mu(G\sqcup H) -1 & \text{ if } T(v,G) \text{ and } T(w,H)\\
\mu(G\sqcup H) & \text{ otherwise} 
\end{cases}
\end{align*}
\end{lemma}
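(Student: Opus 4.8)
The plan is first to sandwich the quantity between $\mu(G\sqcup H)-1$ and $\mu(G\sqcup H)$, and then to pin down exactly when the lower value is attained. Since $G\sqcup H$ is a spanning subgraph of $(G\sqcup H)+vw$, every path covering of $G\sqcup H$ is also one of $(G\sqcup H)+vw$, so $\mu((G\sqcup H)+vw)\le \mu(G\sqcup H)$. For a matching lower bound I would take a minimal path covering $\mathcal{Q}$ of $(G\sqcup H)+vw$. Because $vw$ is the only edge joining $G$ and $H$, and the paths of $\mathcal{Q}$ are vertex-disjoint, at most one path of $\mathcal{Q}$ can traverse $vw$; if none does, then $\mathcal{Q}$ already covers $G\sqcup H$ and the two invariants coincide, while if one path $R$ uses $vw$, deleting that edge splits $R$ into two and yields a covering of $G\sqcup H$ with exactly one extra path. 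This shows $\mu((G\sqcup H)+vw)\in\{\mu(G\sqcup H)-1,\ \mu(G\sqcup H)\}$, so only the dichotomy remains to be characterized.

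For the forward implication, suppose $T(v,G)$ and $T(w,H)$ hold. Then I would choose a minimal path covering of $G$ in which $v$ is an endpoint of some path $P$, and (independently, using disjointness of $G$ and $H$) a minimal path covering of $H$ in which $w$ is an endpoint of some path $Q$. Orienting $P$ to end at $v$ and $Q$ to begin at $w$, the concatenation $P\sim v\sim w\sim Q$ through the new edge is a single path, and together with the remaining $\mu(G)-1$ paths in $G$ and $\mu(H)-1$ paths in $H$ it covers $(G\sqcup H)+vw$ using $\mu(G)+\mu(H)-1=\mu(G\sqcup H)-1$ paths, where the last equality is Lemma~\ref{l:disjoint}. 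Combined with the lower bound above, this forces the value to equal $\mu(G\sqcup H)-1$.

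For the converse I would assume $\mu((G\sqcup H)+vw)=\mu(G\sqcup H)-1$ and take a minimal covering $\mathcal{Q}$ realizing it. By the dichotomy from the first paragraph, $\mathcal{Q}$ must use $vw$, in exactly one path $R$. Deleting $vw$ from $R$ breaks it into a subpath lying in $G$ with $v$ as a terminal vertex and a subpath lying in $H$ with $w$ as a terminal vertex; every other path of $\mathcal{Q}$ lies wholly in $G$ or wholly in $H$. Thus the resulting $\mu(G\sqcup H)$ paths partition into a covering of $G$ of size $a\ge \mu(G)$ and a covering of $H$ of size $b\ge \mu(H)$ with $a+b=\mu(G)+\mu(H)$, forcing $a=\mu(G)$ and $b=\mu(H)$; both coverings are therefore minimal and witness $T(v,G)$ and $T(w,H)$. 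The one point requiring genuine care throughout — the main obstacle, though it is structural rather than difficult — is verifying that because $vw$ is the unique edge joining $G$ and $H$, any simple path crosses it at most once, so that every covering decomposes cleanly along $G$ and $H$ (with single-vertex pieces allowed, their lone vertex still counting as terminal). It is exactly this clean split that converts the global path count into the two separate minimality conditions $T(v,G)$ and $T(w,H)$.
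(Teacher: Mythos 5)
Your proposal is correct and follows essentially the same route as the paper's proof: bound the new invariant between $\mu(G\sqcup H)-1$ and $\mu(G\sqcup H)$ by deleting the edge $vw$ from a minimal covering, then characterize when the lower value occurs via the terminal-vertex conditions in both directions. Your counting argument ($a+b=\mu(G)+\mu(H)$ with $a\ge\mu(G)$, $b\ge\mu(H)$ forcing equality) makes explicit a step the paper leaves implicit, but the substance is identical.
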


\begin{proof}
Let $\mu(G) = c$, $\mu(H)= d$ and $\mu \Big( (G \sqcup H )+ vw\Big)  = t$. Clearly, $t \leq c+d$.

Let $R_1, \dots, R_t$ be a minimal path cover of $(G\sqcup H) + vw$.  If no $R_i$ contains
$vw$ then this is also a minimal path cover of $(G\sqcup H)$ so $t= c+d$.  
Suppose $R_1$ contains $vw$ and note that $R_1$  is the only path with
vertices in both $G$ and $H$.
Removing $vw$ gives two paths $P\subseteq G$ and $Q\subseteq H$. Paths $P$ and $Q$ along
with $R_2, \dots, R_t$ cover $G\sqcup H$, so $t+1 \geq c+d$.  Thus, $t$ can either be $c+d$ or $c+d -1$.

If $t = c+d -1$, then we have the minimal $(t+1)$-path covering $P,Q, R_2, \ldots, R_t$ of $G \sqcup H$, as above.
We note that $v$ must be a terminal point of $P$ and $w$ must be a 
terminal point of $Q$, by construction. This path covering may be partitioned into a $c$-path covering of $G$ containing $P$
and a $d$-path covering of $H$ containing $Q$. Thus, $T(v,G)$ and $T(w,G)$ hold.

Conversely, suppose  $T(u,G)$ and $T(w,H)$ both hold. 
Let $P_1, \dots, P_c$ be a minimal path of $G$ with $v$ a terminal vertex of $P_1$ and let $Q_1,
\dots, Q_d$ be a minimal path cover of $H$ with $w$ a terminal vertex of $Q_1$.    
The edge $vw$ knits $P_1$ and $Q_1$ into a single path and $P_1\sim Q_1, P_1, \dots, P_c, Q_1,
\dots, Q_d$ is a $c+d-1$ cover of $(G\sqcup H)+  vw$.  Consequently, $t \leq c+d-1$.

Thus,  $T(u,G)$ and $T(w,H)$ both hold if and only if $t = c+d -1$. Otherwise, $t= c+d$.
\end{proof}

\begin{corollary}
\label{c:disjointaddedge}
Let $v \in G$ and $w \in H$.
\begin{align*}
\MU \Big( ( G \sqcup H)  +  vw \Big) = 
\begin{cases}
\MU(G\sqcup H) -2 & \text{ if } G=H=K_1\\
\MU(G\sqcup H) -1 & \text{ if } T(v,G) \text{ and } T(w,H)\\
\MU(G\sqcup H) & \text{ Otherwise} 
\end{cases}
\end{align*}
\end{corollary}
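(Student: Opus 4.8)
The plan is to deduce the statement from Lemma~\ref{l:ami}, which gives $\MU = \mu - \h$, together with the preceding lemma computing $\mu\big((G\sqcup H)+vw\big)$. Once those are in hand, the only genuine task is to determine the Hamiltonicity indicator $\h$ of the two relevant graphs, $G\sqcup H$ and $G':=(G\sqcup H)+vw$, and to combine the bookkeeping.

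First I would record that $G\sqcup H$ is disconnected (both $G$ and $H$ are nonempty) and has at least two vertices without being $K_2$, so it is non-Hamiltonian even under the convention of Section~\ref{s:traceability}; hence $\h(G\sqcup H)=0$ and, by Lemma~\ref{l:ami}, $\MU(G\sqcup H)=\mu(G\sqcup H)$.

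The key step is to evaluate $\h(G')$. Since $G$ and $H$ are vertex-disjoint, the edge $vw$ is the \emph{only} edge of $G'$ joining the part lying in $G$ to the part lying in $H$; thus $vw$ is a bridge of $G'$. Whenever $G'$ has at least three vertices, a bridge precludes a Hamiltonian cycle: a spanning cycle would require two edge-disjoint crossings between the two sides, but only $vw$ is available. Hence $G'$ can be Hamiltonian only in the degenerate case $G'=K_2$, that is, exactly when $G=H=K_1$. This yields $\h(G')=1$ if $G=H=K_1$ and $\h(G')=0$ otherwise. I expect this bridge-and-convention bookkeeping to be the main obstacle, since it is precisely where the extra $-2$ in the first case originates.

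Finally I would substitute into $\MU(G')=\mu(G')-\h(G')$. If $G=H=K_1$, then $T(v,G)$ and $T(w,H)$ hold trivially (the lone vertex of $K_1$ is a terminal vertex of its single-vertex path covering), so the preceding lemma gives $\mu(G')=\mu(G\sqcup H)-1$; combined with $\h(G')=1$ this gives $\MU(G')=\MU(G\sqcup H)-2$. If $T(v,G)$ and $T(w,H)$ hold but $(G,H)\neq(K_1,K_1)$, then $\mu(G')=\mu(G\sqcup H)-1$ and $\h(G')=0$, giving $\MU(G')=\MU(G\sqcup H)-1$. In the remaining case at least one of $T(v,G)$, $T(w,H)$ fails, which excludes $G=H=K_1$, so $\mu(G')=\mu(G\sqcup H)$ and $\h(G')=0$, giving $\MU(G')=\MU(G\sqcup H)$. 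These three outcomes are exactly the claimed cases.
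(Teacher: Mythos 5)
Your proposal is correct and follows essentially the same route as the paper: apply Lemma~\ref{l:ami} together with the preceding lemma on $\mu\big((G\sqcup H)+vw\big)$, and observe that $(G\sqcup H)+vw$ is Hamiltonian precisely when it equals $K_2$, i.e.\ when $G=H=K_1$. Your bridge argument merely fills in the detail the paper asserts without proof, so nothing further is needed.
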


\begin{proof}
Let $\delta = 1$ if $T(v,G)$ and  $T(w,H)$ are both true {\ch and $\delta = 0$ otherwise. Then}
\begin{align*}
\MU\Big ( (G \sqcup H)  +  vw \Big) &= 
\mu\Big ( (G \sqcup H)  +  vw \Big) - \h\Big ( (G \sqcup H)  +  vw \Big) \\ 
&= \mu( (G \sqcup H)  - \delta - \h\Big ( (G \sqcup H)  +  vw \Big) 
\end{align*}
The final term is $-1$ if and only if $G=H=K_1$.
\end{proof}


\section{Decomposing Maximal $t$-path traceable graphs}
\label{s:decomposition}
In this section we prove our main result, a maximal $t$-path traceable graph may be uniquely written as
the join of a complete graph and a disjoint union of graphs that are also maximal with respect to
traceability, but which are also either complete or have no universal vertex.
We work with the families of graphs $\calM_t$ for $t\geq0$ and $\calN_t$ for $t\geq 1$.

\begin{align*}
\mathscr{M}_t & := \{ G \vert   \MU(G) = t \text{ and  } \MU(G + e) < t, \forall e \in E(\overline{G}) \}\\
\mathscr{N}_t  &:= \{ G \in \mathscr{M}_t \vert G \text{ is connected and has no universal vertex } \}
\end{align*}

The set  $\mathscr{M}_0$ is the set of complete graphs.  The set $\mathscr{M}_1$ is the
set of graphs with a Hamiltonian path but no Hamiltonian cycle, that is, maximal non-Hamiltonian
graphs.
For $t > 1$, $\mathscr{M}_t$ is also the set of graphs $G$ such $\mu(G)=t$ and $\mu(G+e) = t-1$ for
any $e \in E(\overline{G})$.  We will call these {\em maximal   $t$-path traceable graphs}.  
A graph in $\calN_t$ will be called {\em trim}.

\begin{proposition}
\label{p:max1}
For $0 \leq s <t$, $G \in \calM_t$ if and only if $K_s*G \in \calM_{t-s}$.
\end{proposition}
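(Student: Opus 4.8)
The plan is to reduce the whole statement to Lemma~\ref{l:star}, which gives $\MU(K_s*G)=\max\{0,\MU(G)-s\}$, together with two elementary structural facts about the join. First I would observe that the non-edges of $K_s*G$ are precisely the non-edges of $G$: the $K_s$-part is complete and the join supplies every edge between $V(K_s)$ and $V(G)$, so every missing edge lies inside the $G$-part, whence $E(\overline{K_s*G})=E(\overline{G})$. Second, adding such a non-edge commutes with the join, i.e. $(K_s*G)+e = K_s*(G+e)$ for each $e\in E(\overline{G})$. Both follow immediately from the definition of $\ast$, and together they are the crux of the argument, since they let me carry the edge-maximality condition across the join.

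Before splitting into the two directions I would record the ``value'' part of each membership condition. Because $0\le s<t$ forces $t-s\ge 1>0$, Lemma~\ref{l:star} shows that $\MU(K_s*G)=t-s$ holds if and only if $\MU(G)=t$: the positive branch of the maximum is forced on each side, so the first clause of membership in $\calM_t$ matches the first clause of membership in $\calM_{t-s}$.

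For the forward direction I would assume $G\in\calM_t$, so $\MU(G)=t$ and hence $\MU(K_s*G)=t-s$. Given any non-edge of $K_s*G$, which by the first observation is some $e\in E(\overline{G})$, the commutation identity and Lemma~\ref{l:star} give $\MU\big((K_s*G)+e\big)=\MU\big(K_s*(G+e)\big)=\max\{0,\MU(G+e)-s\}$. Maximality of $G$ forces $\MU(G+e)\le t-1$, so this quantity is at most $\max\{0,t-1-s\}=t-1-s<t-s$, using $s\le t-1$ to keep $t-1-s\ge 0$. Thus every non-edge of $K_s*G$ drops $\MU$ below $t-s$, giving $K_s*G\in\calM_{t-s}$.

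The backward direction runs the same computation in reverse: assuming $K_s*G\in\calM_{t-s}$ yields $\MU(G)=t$ from the value part, and for each $e\in E(\overline{G})$ the hypothesis $\MU\big(K_s*(G+e)\big)<t-s$ together with $\MU\big(K_s*(G+e)\big)=\max\{0,\MU(G+e)-s\}$ rules out $\MU(G+e)\ge t$, so $\MU(G+e)<t$ and $G\in\calM_t$. I do not anticipate a genuine obstacle; the only place demanding care is the bookkeeping with the maximum in Lemma~\ref{l:star}, and the hypothesis $s<t$ is exactly what pins that maximum to its nonzero branch on both sides so that the two maximality conditions correspond cleanly.
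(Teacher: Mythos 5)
Your proof is correct and follows essentially the same route as the paper's: both reduce the statement to Lemma~\ref{l:star} via the observations that $E(\overline{K_s*G})=E(\overline{G})$ and $(K_s*G)+e=K_s*(G+e)$, then transfer the edge-maximality condition across the join. Your only departure is the more careful bookkeeping with the $\max\{0,\cdot\}$ branch, which tightens a point the paper glosses over but does not change the argument.
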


\begin{proof}
We have $\MU(K_s*G) = \MU(G)-s$, so we just need to show that $K_s \ast G$ is
maximal if and only if $G$ is maximal.  The only edges that can be added to $K_s*G$ are those between
vertices of $G$, that is, $E(\overline{K_s*G}) = E(\overline{G})$.  For
such an edge $e$, 
\begin{align}
\label{e:max1}
\MU\Big( (K_s*G) +  e\Big) &= \MU\Big( K_s* (G +  e) \Big) \notag\\
&= \MU(G +  e) -s 
\end{align}
Consequently, $\MU(G+  e) = \MU(G)-1$ if and only if $\MU\Big( (K_s*G) +  e\Big) =
\MU(K_s*G)-1$.
\end{proof}

Note that the proposition is false for $s=t>0$ since $K_s*G$ will not be a complete graph and
$\calM_0$ is  the set of complete graphs.  The proof breaks down in \eqref{e:max1}.

\begin{proposition}
\label{p:cot}
Let $G \in \calM_c$ and $H \in \calM_d$.  The following are equivalent.
\begin{enumerate}
\item $G \sqcup H \in \calM_{c+d + \h(G) + \h(H)}$  
\item Each of $G$ and $H$ is either complete or has no universal vertex.
\end{enumerate}
\end{proposition}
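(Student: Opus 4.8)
The plan is to recast condition (1) as a statement about which edges one may add to $G\sqcup H$ and then to analyze the three types of non-edge separately. By Lemma~\ref{l:disjoint} (equivalently Proposition~\ref{p:lemmy}) we have $\MU(G\sqcup H)=c+d+\h(G)+\h(H)=:N$, so the subscript in (1) is automatically correct; the content of (1) is the maximality clause, that $\MU\big((G\sqcup H)+e\big)<N$ for every $e\in E(\overline{G\sqcup H})$. The non-edges of $G\sqcup H$ fall into three families: those internal to $G$, those internal to $H$, and the crossing pairs $vw$ with $v\in G$, $w\in H$. I would first show that the internal non-edges never obstruct maximality, so that (1) is governed entirely by the crossing edges, and then identify exactly when every crossing edge lowers $\MU$.

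For an internal edge $e\in E(\overline{G})$ we have $(G\sqcup H)+e=(G+e)\sqcup H$, and since a disjoint union is never Hamiltonian, $\MU\big((G+e)\sqcup H\big)=\mu(G+e)+\mu(H)$ while $N=\mu(G)+\mu(H)$; such an edge therefore lowers $\MU$ precisely when $\mu(G+e)<\mu(G)$. This is exactly what the maximality of $G$ inside $\calM_c$ provides (and a complete component contributes no internal non-edge at all), and symmetrically for $H$. Hence the internal edges are harmless and (1) reduces to the crossing edges. For a crossing edge $vw$, Corollary~\ref{c:disjointaddedge} says that $\MU\big((G\sqcup H)+vw\big)<N$ if and only if $T(v,G)$ and $T(w,H)$ both hold (the exceptional case $G=H=K_1$ lowers $\MU$ as well). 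Thus (1) holds if and only if $T(v,G)$ is true for every $v\in G$ and $T(w,H)$ is true for every $w\in H$.

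The heart of the argument is then the following translation, which I would isolate as a lemma: for $G\in\calM_c$, every vertex of $G$ is a terminal vertex of some minimal path covering if and only if $G$ is complete or has no universal vertex. If $G$ is complete, every vertex is an endpoint of some Hamiltonian path, so $T(v,G)$ holds for all $v$. If $G$ has no universal vertex, then each $v$ has a non-neighbour $w$; maximality of $G$ produces a minimal path covering of $G+vw$, and the edge $vw$ must actually be used in it (otherwise that covering would already cover $G$ with too few paths, or exhibit a Hamiltonian cycle of $G$), so deleting $vw$ splits one path and yields a minimal covering of $G$ with $v$ as an endpoint, i.e.\ $T(v,G)$. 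Conversely, suppose $u$ is a universal vertex and $G$ is not complete; writing $G=K_1\ast G'$, Proposition~\ref{p:max1} together with Lemma~\ref{l:star} forces $\mu(G')=\mu(G)+1\ge 2$. If $u$ were a terminal vertex of a minimal covering, then deleting $u$ from its path would leave a covering of $G'$ by at most $\mu(G)$ paths, contradicting $\mu(G')=\mu(G)+1$; hence $T(u,G)$ fails and not every vertex is terminal.

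Combining the two reductions gives the equivalence: (1) holds iff every vertex of each of $G$ and $H$ is terminal in some minimal covering, which by the lemma holds iff each of $G$ and $H$ is complete or has no universal vertex, namely (2). I expect the main obstacle to be the lemma of the previous paragraph, and within it the ``no universal vertex'' direction, where one must manufacture, for \emph{every} vertex simultaneously, a minimal covering ending at it. The device that makes this work is the insertion/deletion correspondence between minimal coverings of $G$ and of $G+vw$, together with the $K_1\ast G'$ decomposition and the identity $\mu(K_1\ast G')=\mu(G')-1$, which is precisely what rules out universal vertices as terminals and so drives the converse direction.
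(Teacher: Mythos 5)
Your proof is correct and follows essentially the same route as the paper: reduce the maximality of $G\sqcup H$ to the crossing edges via Lemma~\ref{l:disjoint} and Corollary~\ref{c:disjointaddedge}, then characterize the vertices $v$ satisfying $T(v,G)$ as exactly the non-universal ones, which is precisely the lemma the paper proves immediately after this proposition. The only divergence is in the ``universal $\Rightarrow$ not terminal'' direction, where the paper notes that a terminal vertex of a minimal covering cannot be adjacent to the terminal points of the other paths (else two paths would merge), whereas you delete the universal vertex $u$ and derive a contradiction from $\mu(G-u)=\mu(G)+1$ via Lemma~\ref{l:star}; both arguments are sound.
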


\begin{proof}
We have already shown that 
$\MU(G \sqcup H) = c+d + \h(G) + \h(H)$.  We have to consider whether adding an edge to $G\sqcup H$
reduces the $\MU$-invariant.  There are three cases to consider, the extra edge may be in
$E(\overline{G})$ or $E(\overline{H})$ or it may join a vertex in $G$
to one in $ H$.
Since $G$ is maximal, adding an edge to $G$ is either
impossible, when $G$ is complete, or it reduces the $\MU$-invariant of $G$.  
This edge would also reduce the $\MU$-invariant of $G \sqcup H$ by Lemma~\ref{l:disjoint}.
The case for adding an edge of $H$ is the same.  Consider the edge
$vw$ for $v \in V(G)$ and $w \in V(H)$. 
By Corollary~\ref{c:disjointaddedge}  the $\MU$-invariant will drop if
and only if  $v$ is the terminal point of a path in a minimal path covering of $G$ and similarly
for $w$ in $H$, that is, $T(v,G)$ and  $T(w,H)$.
Clearly this holds for all vertices in a complete graph.  
The following lemma shows that $T(v,G)$ holds for $G\in \calM_c$ with $c>0$ if and only if $v$ is
not a universal vertex in $G$.   
Thus, in order for  $G \sqcup H$ to be maximal $G$ must either be complete, or be maximal itself,
and have no universal vertex, and similarly for $H$.
\end{proof}

As a key step before the main theorem, the next lemma shows that in a
maximal graph, each vertex is universal, or a terminal vertex in a minimal path
covering.

\begin{lemma}
Let $c \geq 1 $ and $G\in \calM_c$.
For any two non-adjacent vertices $v,w$ in $G$ there is a $c$-path covering of $G$ in which both $v$
and $w$ are terminal points of paths.  Moreover, a vertex $v\in G$ is a terminal point in some
$c$-path covering if and only if $v$ is not universal.
\end{lemma}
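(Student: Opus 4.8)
The plan is to reduce everything to the case of maximal non-Hamiltonian graphs via Proposition~\ref{p:max1}. First I would record that since $c\geq 1$ the graph $G$ is non-Hamiltonian, so by Lemma~\ref{l:ami} we have $\mu(G)=\MU(G)=c$ and a $c$-path covering is precisely a minimal path covering. Taking $s=c-1$ in Proposition~\ref{p:max1}, the graph $G':=K_{c-1}\ast G$ lies in $\calM_1$; that is, $G'$ is maximal non-Hamiltonian, with a Hamiltonian path ($\mu(G')=1$), no Hamiltonian cycle, and such that adding any edge creates one. The two statements will be transported between $G$ and $G'$ using the correspondence from the proof of Lemma~\ref{l:ami} between $c$-path coverings of $G$ and Hamiltonian paths of $G'$: a covering $P_1,\dots,P_c$ is linked through the $c-1$ universal vertices $u_1,\dots,u_{c-1}$ of $K_{c-1}$ into the path $P_1\sim u_1\sim P_2\sim\cdots\sim u_{c-1}\sim P_c$, and conversely deleting $u_1,\dots,u_{c-1}$ from a Hamiltonian path of $G'$ cuts it into at most $c$ paths in $G$; since $\mu(G)=c$ this must be exactly $c$.

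For the first claim I would note that non-adjacent vertices $v,w$ of $G$ remain non-adjacent in $G'$, because the join only adds edges incident to the $K_{c-1}$ vertices. As $G'$ is maximal non-Hamiltonian, $G'+vw$ has a Hamiltonian cycle, and this cycle must traverse the new edge $vw$, for otherwise it would already be a Hamiltonian cycle of $G'$. Deleting $vw$ from the cycle yields a Hamiltonian path of $G'$ whose two endpoints are exactly $v$ and $w$. Deleting $u_1,\dots,u_{c-1}$ then produces a $c$-path covering of $G$, and since $v$ and $w$ are endpoints of the Hamiltonian path and lie in $G$ (hence survive), each is an endpoint of its surviving segment, i.e. a terminal point of the covering.

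The ``moreover'' follows from this. If $v$ is not universal it has a non-neighbour $w$, and the first claim already supplies a $c$-path covering with $v$ terminal. Conversely, suppose $v$ is universal in $G$; then $v$ is universal in $G'$ as well, since the join makes it adjacent to every $u_i$. If $v$ were a terminal point of some covering $P_1,\dots,P_c$, I would orient $P_1$ so that $v$ is its free end and apply the linking construction above, making $v$ an endpoint of a Hamiltonian path of $G'$; being universal, $v$ is adjacent to the other endpoint, so the path closes into a Hamiltonian cycle of $G'$, contradicting $G'\in\calM_1$. Hence universal vertices are never terminal, completing the equivalence.

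I expect the main obstacle to be bookkeeping rather than a deep idea: verifying that the two edge operations (deleting $vw$ from the cycle, and deleting the $u_i$ from a path) leave $v$ and $w$ in the correct terminal positions, and that the path count lands at exactly $c$ rather than dropping below, which is where the minimality $\mu(G)=c$ is essential. The only genuine case distinction hides inside the reduction: when $c=1$ one has $G'=G$ and the argument is just the classical statement for maximal non-Hamiltonian graphs, so no separate treatment of small $c$ is required.
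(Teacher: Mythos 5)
Your proof is correct, but it is organized differently from the paper's. The paper argues entirely inside $G$: since $G$ is maximal, $G+vw$ admits a $(c-1)$-path covering, that covering must use the new edge $vw$ (else $G$ itself would be $(c-1)$-path coverable), and deleting $vw$ from the path containing it yields a $c$-path covering with $v$ and $w$ terminal; for the converse of the ``moreover'' it observes directly that a terminal vertex of one path in a minimal covering cannot be adjacent to the terminal vertices of the other paths (nor, when $c=1$, to the other end of its own path), so it cannot be universal. You instead push everything up to $G'=K_{c-1}\ast G\in\calM_1$ via Proposition~\ref{p:max1} and run the classical maximal-non-Hamiltonian argument there, transporting the conclusion back through the correspondence between $c$-path coverings of $G$ and Hamiltonian paths of $G'$ (which is exactly the device in the proof of Lemma~\ref{l:ami}). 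The underlying mechanism is the same in both cases --- maximality forces the added edge to be used, and removing it exposes $v$ and $w$ as endpoints --- but your reduction buys a uniform treatment in which the $c=1$ case is the whole argument rather than a separately stated special case, at the cost of the bookkeeping you acknowledge: checking that deleting the $u_i$ yields exactly $c$ nonempty segments (which needs $\mu(G)=c$) and that $v$ and $w$ survive as endpoints of the extremal segments. The paper's direct version is shorter and avoids that correspondence entirely. Your handling of the converse (a universal $v$ placed at the free end of the linked Hamiltonian path would close it into a Hamiltonian cycle of $G'$) is a clean equivalent of the paper's adjacency observation. No gaps.
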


\begin{proof}
Suppose $c>1$ and let $v,w$ be non-adjacent in $G$.  Since $G$ is maximal $G +  vw$ has a $(c-1)$-path
covering, $P_1, \dots, P_{c-1}$.  The edge $vw$ must be contained in some $P_i$ because $G$ has
no $(c-1)$-path covering.  Removing that edge gives a $c$-path covering of $G$ with $v$ and $w$ as
terminal vertices.  The special case $c=1$ is  well known, adding the edge $vw$ gives a Hamiltonian
cycle, and removing it leaves a path with endpoints $v$ and $w$.
A consequence is that any non-universal vertex is the terminal point of some path in a $c$-path
covering.

Suppose  $P_1, \dots, P_c$ is a  $c$-path covering of $G\in \calM_c$ with $v$ a
terminal point of $P_i$.  Then $v$ is not adjacent to any of the terminal points of $P_j$ for $j\ne i$, for
otherwise two paths could be combined  into a single one.  In the case $c=1$, $v$ cannot be
adjacent to the other terminal point of $P_1$, otherwise $G$ would have a Hamiltonian cycle.
Consequently a universal vertex is not a terminal point in a $c$-path covering of $G$.
\end{proof}

\begin{theorem}
\label{t:decomposition}
For any $G \in \mathscr{M}_t$, $t >0$, $G$ may be uniquely decomposed  as
$\displaystyle K_s \ast ( G_1 \sqcup \ldots \sqcup G_r)$, where $s$ is
the number of universal vertices of $G$, and each $G_j$ is either complete
or $G_j\in \mathscr{N}_{t_j}$ for some  $t_j > 0$.  Furthermore
$\displaystyle t = \sum_{j=1}^r t_j + \sum_{j=1}^r\h(G_j)  - s$. 
\end{theorem}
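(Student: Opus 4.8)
The plan is to build the decomposition in two canonical stages—first peeling off the universal vertices as a complete-graph factor, then splitting what remains into its connected components—and to read off uniqueness from the fact that both stages are intrinsic to $G$.

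First I would let $v_1,\dots,v_s$ be the universal vertices of $G$. Since each $v_i$ is adjacent to everything, the $v_i$ are pairwise adjacent and adjacent to all other vertices, so $G = K_s \ast G'$, where $G'$ is the subgraph induced on the remaining vertices. Because $t>0$ the graph $G$ is not complete, so $G'$ is nonempty; moreover a vertex of $G'$ that were universal in $G'$ would, being adjacent to all the $v_i$ as well, be universal in $G$, so $G'$ has no universal vertex. Applying Proposition~\ref{p:max1} with the roles $G\mapsto G'$ and $t\mapsto t+s$ (legitimate since $0\le s<t+s$) converts $G=K_s\ast G'\in\calM_t$ into $G'\in\calM_{t+s}$.

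Next I would write $G' = G_1\sqcup\cdots\sqcup G_r$ for the connected components of $G'$ (so $r\ge 1$) and show each $G_j$ is maximal and is either complete or trim. For maximality, take $e\in E(\overline{G_j})$; by Proposition~\ref{p:lemmy} adding $e$ to $G'$ alters only the $j$-th summand, so $\MU(G'+e)<\MU(G')$ is equivalent to $\mu(G_j+e)<\mu(G_j)$. Maximality of $G'$ forces the latter, hence $\mu(G_j+e)=\mu(G_j)-1$; since $\h$ can only rise when an edge is added, Lemma~\ref{l:ami} ($\mu=\MU+\h$) then yields $\MU(G_j+e)<\MU(G_j)$, so $G_j\in\calM_{t_j}$ with $t_j=\MU(G_j)$. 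To see each $G_j$ is complete or has no universal vertex, suppose $G_j$ is not complete; then $t_j\ge 1$ (as $\calM_0$ is exactly the complete graphs), so the preceding lemma shows a universal vertex $u$ of $G_j$ is a terminal point of no minimal path covering, i.e.\ $T(u,G_j)$ fails. If $r\ge 2$ I would choose a vertex $w$ in another component and add $uw$: writing $G'=G_j\sqcup G''$ with $G''$ the union of the remaining components, Corollary~\ref{c:disjointaddedge} shows $\MU$ does not drop (the $T(u,G_j)$ clause fails and $G_j\ne K_1$), contradicting maximality of $G'$; hence $G_j$ has no universal vertex. If $r=1$ then $G_1=G'$ already has none. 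In either case a non-complete $G_j$ is connected with no universal vertex, so $G_j\in\calN_{t_j}$ with $t_j>0$. The count now follows: $G'$ is non-Hamiltonian (its $\MU$ is positive), so Lemma~\ref{l:star} gives $\MU(G')=t+s$, while Proposition~\ref{p:lemmy} gives $\MU(G')=\sum_j\MU(G_j)+\sum_j\h(G_j)=\sum_j t_j+\sum_j\h(G_j)$, and equating the two yields $t=\sum_{j=1}^r t_j+\sum_{j=1}^r\h(G_j)-s$. The step I expect to be the main obstacle is this component-maximality argument: one must move between the $\mu$-decrease that disjoint-union maximality naturally delivers and the $\MU$-decrease defining membership in $\calM_{t_j}$, and then run the cross-component edge argument that excludes a non-complete component with a universal vertex.

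Finally, for uniqueness I would argue that both factors are forced. In any decomposition $K_{s'}\ast(H_1\sqcup\cdots\sqcup H_{r'})$ of the stated form, the $s'$ vertices of $K_{s'}$ are universal in $G$, and no other vertex is: a vertex in some $H_i$ fails to be universal when $r'\ge 2$ (it misses the other components), while if $r'=1$ then $H_1$ cannot be complete, for otherwise $G$ would be complete and lie in $\calM_0$, contradicting $t>0$, so $H_1$ is trim and again has no universal vertex. Hence $s'=s$ and the $K_{s'}$-part is exactly the set of universal vertices of $G$, so $H_1\sqcup\cdots\sqcup H_{r'}$ is the subgraph induced on the non-universal vertices; since each $H_i$ is connected, the $H_i$ are precisely the connected components $G_1,\dots,G_r$, up to reordering.
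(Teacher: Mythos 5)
Your proof is correct and follows essentially the same route as the paper: peel off the universal vertices via Proposition~\ref{p:max1}, split the remainder into components, get the count from Proposition~\ref{p:lemmy}, and use the terminal-vertex lemma together with Corollary~\ref{c:disjointaddedge} (which is exactly the content of Proposition~\ref{p:cot}) to force each component to be complete or trim. If anything, your write-up is more careful than the paper's in two places: you correctly pass from the $\mu$-drop that disjoint-union maximality gives to the $\MU$-drop defining $\calM_{t_j}$ using $\h(G_j+e)\geq\h(G_j)$, and you make the uniqueness argument explicit, both of which the paper leaves implicit or incomplete.
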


\begin{proof}
Suppose $G \in \mathscr{M}_t$ and let $s$ be the number of universal
vertices of $G$.
Let $r$ be the number of  components in the graph obtained by removing
the universal vertices from  $G$, let $G_1, \dots G_r$ be the
components and let $\MU(G_j)= t_j$.

Proposition~\ref{p:lemmy} shows that 
$\displaystyle t = \sum_{j=1}^r t_j + \sum_{j=1}^r\h(G_j)  - r$.
By Proposition~\ref{p:max1}, we have that $G \in
\mathscr{M}_t$ if and only if $G_1 \sqcup \ldots \sqcup G_r \in
\mathscr{M}_{t+s}$.    Furthermore, each  $G_j$ must be in $\mathscr{M}_{t_j}$ for
otherwise we could 
. Without loss of generality if we add an edge $e$
to $G_1$, such that $\MU(G_1 +  e) < t_1$, then 

\begin{align*} 
\MU(G +  e) &=  \MU(G_1 +  e) + \sum_{j=2}^r t_j + \sum_{j=1}^r \h(G_j) -s \\
&<  \sum_{j=1}^r t_j + \sum_{j=1}^r \h(G_j) -s \\
&= t
\end{align*}

Now, we apply Proposition~\ref{p:cot}, so then $\displaystyle G_1 \sqcup \ldots \sqcup G_r \in \mathscr{M}_{t+s}$, where $\displaystyle t+s =\sum_{j=1}^r t_j + \sum_{j=1}^r\h(G_j)$ if and only if $G_j$ is either trim or complete. In other words, $G_j \in \mathscr{N}_{t_j}$ for $t_j > 0$ or $G_j \in \mathscr{M}_0$ for $t_j = 0$.
\end{proof}


\section{Trim maximal $t$-path traceable graphs}
\label{s:family}
 
Skupien ~\cite{SkupienMNH} discovered the first family of maximal non-Hamiltonian graphs, that is, graphs
in $\calM_1$.
These graphs are formed by taking the join with $K_r$ of the  disjoint union of $r+1$ complete graphs.
The smallest graph in $\calN_2$ is shown in Figure~\ref{f:triangle}.    Chv\'atal identified its join
with $K_1$ as the smallest maximal non-Hamilitonian graph that is not 1-tough, that is, not one of
the Skupien family.    Jamrozik, Kalinowski and Skupien~\cite{SkupienCat} generalized this
example to three different families.  Family $A1$ replaces each edge $u_iv_i$ with an arbitrary
complete graph containing $u_i$ and replaces the $K_3$ formed by the $u_i$ with an arbitrary
complete graph.  The result has four cliques, the first three disjoint from each other but each 
intersecting the fourth clique in a single vertex.  This graph is also in $\calN_2$ and its  join
with $K_1$ gives a maximal non-Hamiltonian graph.  Family $A2$ is formed by taking the join with
$K_2$ of the disjoint union of a complete graph and the graph in $\calN_2$ just described.
Theorem~\ref{t:decomposition} shows that the resulting graph is in $\calM_1$.
Family $A3$ is a modification of the $A1$ family based on the graph in Figure~\ref{f:A3}, which is
in $\calN_2$. 
Bullock, Frick, Singleton and van Aardt~\cite{Bullock} recognized that two  constructions of
Zelinka~\cite{Zelinka} gave maximal  non-traceable graphs, that is, elements of $\calM_2$.  
Zelinka's first construction is like the Skupien family: formed from $r+1$ complete graphs followed by the
join with $K_{r-1}$.  The Zelinka Type~II family contains graphs in $\calN_2$ that 
 are a significant  generalization of the graphs in Figures~\ref{f:triangle}~and~\ref{f:A3}.  
In this section we generalize this family further to get graphs in $\calN_t$ for arbitrary $t$.
Our starting point is the graph in Figure~\ref{f:whirligig}, which is in $\calN_3$.

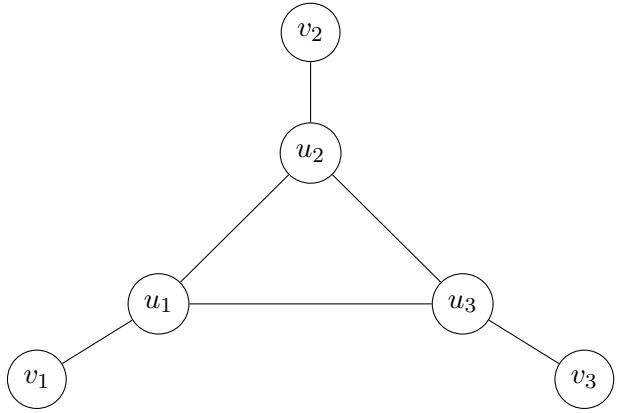
\begin{figure}
\begin{center}
\begin{tikzpicture}
  [scale=2,auto=left,every node/.style={circle,fill=white!20}]
  \node[draw, circle] (n1) at (1.2,1.5) {$v_1$};
  \node[draw, circle] (n2) at (3,3.8)  {$v_2$};
  \node[draw, circle] (n3) at (4.8,1.5)  {$v_3$};
  \node[draw, circle] (n4) at (2,2) {$u_1$};
  \node[draw, circle] (n5) at (3,3)  {$u_2$};
  \node[draw, circle] (n6) at (4,2)  {$u_3$};

\foreach \from/\to in {n1/n4,n4/n5,n5/n6,n6/n4,n2/n5,n3/n6}
  \draw (\from) -- (\to);

\end{tikzpicture}
\end{center}

\caption{Smallest graph in $\calN_2$}
\label{f:triangle}
\end{figure}

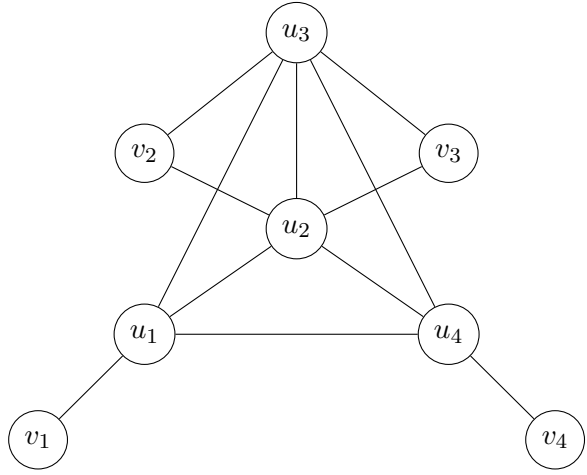
\begin{figure}
\begin{center}
\begin{tikzpicture}
  [scale=2,auto=left,every node/.style={circle,fill=white!20}]
  \node[draw, circle] (n1) at (1.3,1.6) {$v_1$};
  \node[draw, circle] (n2) at (2,2.3)  {$u_1$};
  \node[draw, circle] (n3) at (2,3.5)  {$v_2$};
  \node[draw, circle] (n4) at (3,3) {$u_2$};
  \node[draw, circle] (n5) at (3,4.3)  {$u_3$};
  \node[draw, circle] (n6) at (4,2.3)  {$u_4$};
  \node[draw, circle] (n7) at (4,3.5)  {$v_3$};
  \node[draw, circle] (n8) at (4.7,1.6)  {$v_4$};


\foreach \from/\to in {n2/n4,n2/n5,n2/n6,n4/n5,n4/n6,n5/n6,
n1/n2,n3/n4,n3/n5,n7/n4,n7/n5,n8/n6}
  \draw (\from) -- (\to);

\end{tikzpicture}
\end{center}

\caption{The join of this graph with $K_1$ is the smallest graph in
  the $A3$ family.}
\label{f:A3}
\end{figure}

\begin{example}
\label{ex:whirligig}
Consider  $K_m$ with $m\geq 2t-1$ and vertices $u_1, \dots, u_m$.
Let $G$ be the graph containing $K_m$ along with vertices $v_1, \dots, v_{2t-1}$ and edges $u_iv_i$.
The case with $t=3$ and $m=5=2t-1$ is  Figure~\ref{f:whirligig}.
We claim $G \in \calN_t$.

One can readily check that this graph is $t$-path covered using 
$v_{2i-1} \sim u_{2i-1} \sim u_{2i} \sim v_{2i}$ for $i= 1, \dots, t-1$ and $v_{2t-1}\sim u_{2t-1}
\sim u_{2t} \sim \cdots \sim u_m$.  We check that $G$ is maximal.   By the symmetry of the
graph,  we need only consider the addition of the edge $v_1u_m$ and $v_1u_2$.  In either case, the
last and the first paths listed above may be combined into one, either 
\begin{align*}
&v_{2t-1} \sim u_{2t-1} \sim \cdots \sim u_m \sim  v_1 \sim u_1 \sim u_2 \sim v_2, \quad \text{ or } \quad \\
&v_{2t-1} \sim u_{2t-1} \sim \cdots \sim u_m \sim u_1  \sim v_1 \sim u_2 \sim v_2
\end{align*}
Thus, adding an edge creates a $(t-1)$-path covered graph, proving maximality.
\end{example}

\begin{figure}
\begin{center}
\rot{270}{
\begin{tikzpicture}
  [scale=1.5,auto=left,every node/.style={circle,fill=white!20}]
  \node[draw, circle] (n1) at (2.2,3) {\rot{90}{$u_1$}};
  \node[draw, circle] (n2) at (3,4)  {\rot{90}{$u_2$}};
  \node[draw, circle] (n3) at (4,3.7)  {\rot{90}{$u_3$}};
  \node[draw, circle] (n4) at (4,2.3) {\rot{90}{$u_4$}};
  \node[draw, circle] (n5) at (3,2)  {\rot{90}{$u_5$}};
  \node[draw, circle] (n6) at (1,3)  {\rot{90}{$v_1$}};
  \node[draw, circle] (n7) at (3,5) {\rot{90}{$v_2$}};
  \node[draw, circle] (n8) at (5,4.5)  {\rot{90}{$v_3$}};
  \node[draw, circle] (n9) at (5,1.5)  {\rot{90}{$v_4$}};
  \node[draw, circle] (n10) at (3,1) {\rot{90}{$v_5$}};

\foreach \from/\to in {n1/n2,n1/n3,n1/n4,n1/n5,n2/n4,n2/n5,n3/n4,n3/n5,n2/n3,n4/n5,
n1/n6,n2/n7,n3/n8,n4/n9,n5/n10}
  \draw (\from) -- (\to);

\end{tikzpicture}
}
\end{center}
\caption{Whirligig in $\calN_3$.} 
\label{f:whirligig}
\end{figure}

The next proposition shows that the previous example is the only way to have a trim maximal $t$-path
covered graph with $2t-1$ degree-one vertices.  We start with a
technical  lemma.H

\begin{lemma}
\label{l:degone}
Let $G$ be a connected graph and let $u_1, v_1, v_2, v_3 \in G$ with $\deg(v_i) = 1$, and $u $ adjacent
to $v_1$ and $v_2$ but not $v_3$.  
Then $\mu(G) =  \mu(G + uv_3 )$.
\end{lemma}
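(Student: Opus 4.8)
The plan is to prove the two inequalities $\mu(G+uv_3)\le\mu(G)$ and $\mu(G)\le\mu(G+uv_3)$ separately. The first is immediate, because every path covering of $G$ is also a path covering of the larger graph $G+uv_3$ (it simply declines to use the new edge), so adding an edge cannot increase $\mu$. Hence the entire content of the lemma lies in the reverse inequality, and for that it suffices to produce, from any minimal path covering of $G+uv_3$, a path covering of $G$ with the same number of paths. Equivalently, I want to show that $G+uv_3$ has a minimal path covering that does not actually traverse the edge $uv_3$.

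So first I would fix a minimal path covering $\mathcal{P}$ of $G+uv_3$ and assume it uses $uv_3$ (otherwise $\mathcal{P}$ already covers $G$ and there is nothing to do). The structural leverage comes from the degree-one hypotheses. Since $v_1$ and $v_2$ are pendant with sole neighbour $u$ in both $G$ and $G+uv_3$, each of them is in $\mathcal{P}$ either a single-vertex path or a path in which $u$ is its unique neighbour. Because the edge $uv_3$ is used, the vertex $u$ already has $v_3$ as one of its at most two neighbours along its path, leaving at most one free slot; hence $u$ can be the path-neighbour of at most one of $v_1,v_2$. Therefore at least one of $v_1,v_2$ — say $v_2$ after relabelling — occurs as an isolated single-vertex path in $\mathcal{P}$.

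The core of the argument is then a single swap. I would cut the path containing $uv_3$ at that edge; since $v_3$ has degree one in $G$, this cut leaves a fragment terminating at $v_3$ (possibly the lone vertex $\{v_3\}$) and a fragment having $u$ as an endpoint, both of which use only edges of $G$. I then reattach the isolated vertex $v_2$ at the endpoint $u$ via the edge $uv_2$, which exists in $G$. Splitting one path into two raises the count by one and re-merging $v_2$ with the $u$-fragment lowers it by one, so the resulting family covers exactly the same vertices with exactly $|\mathcal{P}|$ paths and uses no edge outside $G$. This exhibits a path covering of $G$ with $\mu(G+uv_3)$ paths, giving $\mu(G)\le\mu(G+uv_3)$ and hence equality.

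The step I expect to require the most care is the bookkeeping of this swap. I must check that after deleting $uv_3$ the two fragments are genuine $G$-paths — which uses that the unique other neighbour $w$ of $v_3$ satisfies $w\neq u$, so the $v_3$-fragment never secretly relied on $uv_3$ — and that the freed singleton $v_2$ is genuinely distinct from the vertices already adjacent to $u$ in the relevant fragment, so reattaching it neither reuses a vertex nor alters the count. The two hypotheses that $u$ is \emph{not} adjacent to $v_3$ and that $v_1\neq v_2$ are exactly what guarantee, respectively, that $w\neq u$ and that a free pendant vertex is available to swap in; I would verify these small disjointness facts explicitly before performing the surgery.
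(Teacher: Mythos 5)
Your proof is correct and follows the same strategy as the paper's: observe that a minimal path covering of $G+uv_3$ that uses the new edge forces at least one of the pendants $v_1,v_2$ to sit on a single-vertex path, then perform a local surgery at $u$ that eliminates $uv_3$ without increasing the number of paths. Your single cut-at-$uv_3$-and-reattach-$v_2$ move cleanly subsumes the paper's two-case analysis (the paper treats separately whether $P_1$ contains $uv_1$ or $uv_2$, and in the second case must also invoke minimality to argue $u$ is interior before extracting it to form $v_1\sim u\sim v_2$), so your version is, if anything, slightly more economical.
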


\begin{proof}
Let $P_1, \dots, P_r$ be a minimal path covering of $G +uv_3$; 
it is enough to show that there are $r$-paths covering $G$.  
If the covering doesn't include $uv_3$, then $P_1,\dots, P_r$ also give a minimal
path covering of $G$ establishing the claim of the lemma.  
Otherwise, suppose $uv_3$ is  an edge of $P_1$.
We consider two cases.

Suppose $P_1$ contains the edge $uv_1$ (or similarly $uv_2$). 
Then  $P_1$ has $v_1$ as a terminal point and one of the other paths, say $P_2$ must be a
length-$0$ path containing simply     $v_2$.  Let $Q$ be obtained by removing $uv_1$ and $uv_3$ from
$P_1$. Then  $v_1\sim u\sim v_2, Q, P_3, \dots, P_r$, gives an $r$-path covering of $G$

Suppose $P_1$ contains neither  $uv_1$ nor  $uv_2$. 
Then each of $v_1$ and $v_2$  must be on a length-$0$ path in the covering, say $P_2$ and $P_3$ are
these paths.   Furthermore $u$ must not be a terminal point of $P_1$, for, if were, the path could
be extended to include $v_1$ or $v_2$, reducing the number of paths required to cover $G$.
Removing $u$ from $P_1$ yields  two paths, $Q_1, Q_2$.
Then $v_1\sim u\sim v_2, Q_1, Q_2, P_4, \dots, P_r$ gives an $r$-path cover of $G$.  This proves the
lemma.

\end{proof}

\begin{proposition}
Let $G\in \calN_t$.  The number of degree-one vertices in $G$ is at most $2t-1$.
This occurs if and only if the $2t-1 $ vertices of degree-one have
distinct neighbors and removing the degree-one vertices leaves a
complete graph.
\end{proposition}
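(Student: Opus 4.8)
The plan is to argue entirely in terms of $\mu$ and $\MU$, using throughout the elementary observation that a degree-one vertex is never interior to a path, so it must be a terminal vertex of whichever path covers it. Consequently, if a graph has $d$ degree-one vertices then any path covering needs at least $\ceil{d/2}$ paths, since $p$ paths provide at most $2p$ terminal slots; in particular $\mu(G')\ge \ceil{d/2}$ for any $G'$ with $d$ degree-one vertices. For $G\in\calN_t$ note that $G$ is connected and non-Hamiltonian, so $\h(G)=0$ and $\mu(G)=\MU(G)=t$. Write $D$ for the set of degree-one vertices, $k=\abs{D}$, and $H=G-D$.

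To get the bound $k\le 2t-1$ I would rule out $k=2t$. If $k=2t$, then in any minimal ($t$-path) covering the $2t$ vertices of $D$ must fill all $2t$ available terminal slots, so there are no singleton paths and no vertex outside $D$ is ever terminal. But $G$ is trim, hence every vertex is non-universal, and the lemma preceding Theorem~\ref{t:decomposition} says every non-universal vertex is terminal in some minimal covering. Thus every vertex of $G$ would lie in $D$, forcing $G$ to be a disjoint union of edges; connectedness gives $G=K_2$, which is not trim. This contradiction yields $k\le 2t-1$.

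For the forward implication assume $k=2t-1$. First I show $H$ is complete. If $a,b\in H$ are non-adjacent, then in $G+ab$ every vertex of $D$ still has degree one, so $G+ab$ has a degree-one vertex and is therefore non-Hamiltonian; hence $\MU(G+ab)=\mu(G+ab)\ge\ceil{(2t-1)/2}=t=\MU(G)$, contradicting maximality. So $H$ is complete. Next I show the neighbors are distinct. If two vertices of $D$ shared a neighbor $u$, then $u$ would have two degree-one neighbors, and Lemma~\ref{l:degone} together with maximality would force every degree-one vertex to be adjacent to $u$ (otherwise adding the corresponding edge leaves $\MU$ unchanged). Then all $2t-1$ pendants hang off the single vertex $u$; in any covering at most two of them can lie on the path through $u$ and the rest are singletons, and since $G$ is trim it is not a star and so has a vertex outside $\set{u}\cup D$ to cover as well. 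A short case analysis on how many pendants $u$'s path carries then shows at least $2t-1$ paths are required, contradicting $\mu(G)=t$ for $t\ge 2$ (the case $t=1$ being vacuous, since then $k\le 1$).

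For the reverse implication, suppose $G\in\calN_t$ has $H$ complete and its degree-one vertices attached at distinct vertices; then $G$ is precisely $K_m$ with $k$ pendants at distinct vertices. Pairing the pendants two at a time through the clique and threading any pendant-free clique vertices into one of the paths (exactly the covering of Example~\ref{ex:whirligig}) shows $\mu(G)=\ceil{k/2}$, while the endpoint count gives the matching lower bound. Since $\mu(G)=t$ we get $k\in\set{2t-1,2t}$, and the bound already proved excludes $k=2t$, leaving $k=2t-1$. I expect the delicate step to be the distinctness argument: invoking Lemma~\ref{l:degone} to collapse all shared neighbors onto one vertex and then pushing the path count from the easy value $2t-2$ up to the required $2t-1$, where the hypothesis that $G$ is trim (ruling out the star and supplying an extra vertex to cover) is exactly what is needed.
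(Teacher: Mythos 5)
Your proof is correct, and while it follows the same overall skeleton as the paper's (endpoint counting for the bound of $2t$, maximality forcing the core to be complete, Lemma~\ref{l:degone} forcing distinct neighbors, and the whirligig example for realizability), it diverges in two substantive ways. First, to exclude $2t$ degree-one vertices the paper adds an edge from a pendant to a non-adjacent interior vertex and observes the result still needs $t$ paths; you instead note that $2t$ pendants would monopolize every terminal slot of every minimal covering, so by the lemma preceding Theorem~\ref{t:decomposition} (every non-universal vertex is terminal in some minimal covering) trimness would force every vertex to have degree one, collapsing $G$ to $K_2$. This is a clean alternative that leans on the structural lemma rather than on maximality. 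Second, and more importantly, your distinctness argument is more complete than the paper's: the paper simply invokes Lemma~\ref{l:degone}, but that lemma requires a third degree-one vertex \emph{not} adjacent to the shared neighbor $u$, so it says nothing in the case where all $2t-1$ pendants hang off the same $u$. You explicitly close that case with the count showing such a graph needs at least $2t-1>t$ paths (using trimness to rule out the star and supply an uncovered extra vertex when the path through $u$ carries two pendants). Your reverse direction also replaces the paper's citation of Example~\ref{ex:whirligig} with a direct computation $\mu(G)=\ceil{k/2}$ forcing $k=2t-1$; both are fine, though note the paper's intent there is merely to exhibit that the described structure is realized in $\calN_t$, which your counting argument presupposes rather than proves.
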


\begin{proof}
Each degree-one vertex must be a terminal point in a path covering.
So any graph $G$ covered by $t$ paths can have at most $2t$ degree-one
vertices.  
Aside from the case $t=1$ and $G=K_2$,  we can see that a graph with $2t$ degree-one vertices cannot be
maximal $t$-path traceable as follows.  It is easy to check that a $2t$ star is not 
$t$-path traceable (it is also not trim). A $t$-path traceable graph with $2t$ degree-one vertices
must therefore have an interior vertex $w$ that is not connected to one of the degree-one vertices $v$.
Such a graph is not maximal  because the edge $vw$ can 
be added leaving $2t-1$ degree-one vertices. This graph cannot be $(t-1)$-path covered.

Suppose that $G\in \calN_t$ with $2t-1$ degree-one vertices, $v_1,
\dots, v_{2t-1}$. 
Lemma~\ref{l:degone} shows that no two of the $v_i$  can be adjacent
to the same vertex, for that would violate maximality of $G$.
So, the $v_i$ have distinct neighbors.
Furthermore, all the nodes except the $v_i$ can be connected to each
other and a path covering will still require at least $t$ paths since there remain $2t-1$ degree-one
vertices. 
This proves the necessity of the structure claimed in the proposition.  The previous example showed
that the graph is indeed in $\calN_t$.

\end{proof}

We can now generalize the Zelinka family.

\begin{construction}
\label{C:whirligig}
Let $U_0, U_{1}, \dotsm U_{2t-1}$ be disjoint sets and $\displaystyle U= \bigsqcup_{i=0}^{2t-1} U_i$.
Let $m_i = \abs{U_i} $ and assume that for $i>0$ the $U_i$ are
non-empty, so $m_i >0$. For $i=1,\dots, 2t-1$ (but not $i=0$) and $j = 1, \dots, m_i$, let $V_{ij}$ be disjoint
from each other and from $U$.
Form the graph with vertex set $\displaystyle U \sqcup \Big( \bigsqcup_{i=1}^{2t-1} \big(\bigsqcup_{j=1}^{m_i}
V_{ij}\big)\Big)$ and edges $uu'$ for $u,u' \in U$ and $uv$ for any 
$u \in U_i$  and $v \in V_{ij}$ with  $i= 1,\dots, 2t-1$ and $j=1,\dots,m_i$.
The cliques of this graph are $K_U$ and $K_{U_i\sqcup V_{ij}}$ for each  $i= 1,\dots, 2t-1$ and $j=1,\dots,m_i$.
\end{construction}

The graph in Figure~\ref{f:A3} has $m_0=0$, $m_1=m_2=1$ and $m_3=2$,
and the graph in Figure~\ref{f:general} indicates the general construction.

\begin{figure}
\begin{tikzpicture}
  [scale=1.6,auto=left,every node/.style={circle,fill=white!20}]
  \node (n1) at (1.3,3) {$U_0$};
  \node (n2) at (2.3,4)  {$U_1$};
  \node (n3) at (2.3,2)  {$U_2$};
  \node (n4) at (3.5,3) {$\iddots$};
  \node (n6) at (4.7,4)  {$U_{2t-3}$};
   \node (n7) at (4.7,2) {$U_{2t-2}$};
  \node (n8) at (5.7,3)  {$U_{2t-1}$};

 \node (n9) at (1,5)  {$V_{1,1}$};
  \node (n10) at (2,5) {$\ldots$};
  \node (n11) at (3,5)  {$V_{1,m_1}$};

  \node (n12) at (4,5)  {$V_{2t-3,1}$};
    \node (n13) at (5,5) {$\ldots$};
  \node (n14) at (6,5)  {$V_{2t-3,m_{2t-3}}$};

  \node (n15) at (1,1)  {$V_{2,1}$};
  \node (n16) at (2,1) {$\ldots$};
  \node (n17) at (3,1)  {$V_{2,m_2}$};

  \node (n18) at (4,1)  {$V_{2t-2,1}$};
    \node (n19) at (5,1) {$\ldots$};
  \node (n20) at (6,1)  {$V_{2t-2,m_{2t-2}}$};

  \node (n21) at (7,4)  {$V_{2t-1,1}$};
  \node (n22) at (7,3) {$\vdots$};
  \node (n23) at (7,2)  {$V_{2t-1,m_{2t-1}}$};

  \foreach \from/\to in {n2/n9,n2/n10,n2/n11,
n3/n15,n3/n16,n3/n17,
n6/n12,n6/n13,n6/n14,
n7/n18,n7/n19,n7/n20,
n8/n21,n8/n22,n8/n23}
    \draw (\from) -- (\to);

\draw (1.3,3.7) to[out=-45,in=45] (1.3,2.3);
\draw (1.65,4.05) to[out=-70,in=180] (2.3,3.5) to[out=0,in=-60] (2.9,4.45);
\draw (1.65,1.95) to[out=70,in=180] (2.3,2.5) to[out=0,in=60] (2.9,1.55);
\draw (5.7,3.7) to[out=200,in=160] (5.7,2.3);
\draw (5.35,4.05) to[out=250,in=0] (4.7,3.5) to[out=180,in=240] (4.1,4.45);
\draw (5.35,1.95) to[out=110,in=0] (4.7,2.5) to[out=180,in=120] (4.1,1.55);

\draw (3.5,3) ellipse (2.5cm and 1.5cm);


\end{tikzpicture}

\caption{Generalization of the Whirligig, $W$} 
\label{f:general}
\end{figure}
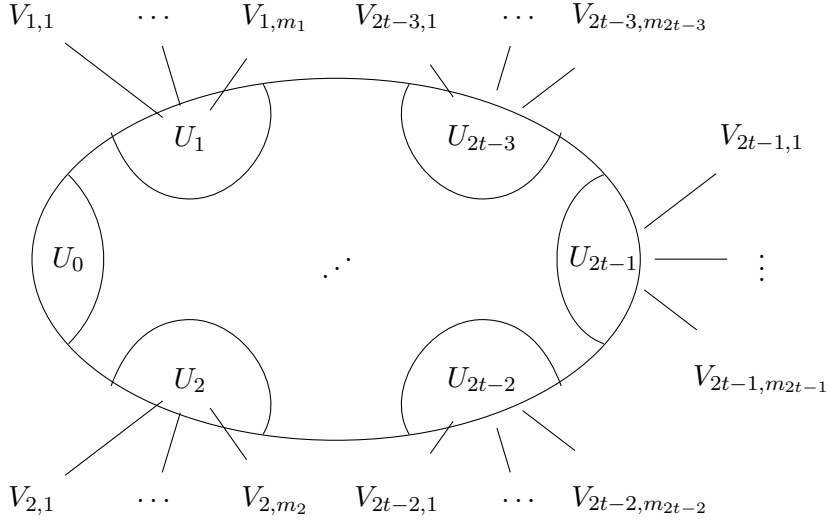

\begin{theorem}
The graph $W$ in Construction~\ref{C:whirligig}  is a trim, maximal $t$-path traceable graph.
\end{theorem}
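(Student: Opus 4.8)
The plan is to verify the three conditions defining $\calN_t$: that $\mu(W)=t$, that $W$ is maximal (adding any non-edge drops $\mu$ to $t-1$), and that $W$ is connected with no universal vertex. The last is immediate: $K_U$ is connected and every petal $V_{ij}$ is attached to it through $U_i$, while for $t\geq 2$ there are at least three flowers, so a vertex of $U_i$ misses the petals of the other flowers, a vertex of $U_0$ misses every petal, and a petal vertex misses the other flowers; hence no vertex is universal. I will call $W_i:=U_i\sqcup\bigsqcup_{j=1}^{m_i}V_{ij}$ the $i$-th \emph{flower}, and for a path cover $\mathcal P$ I write $\epsilon(v)=2-\deg_{\mathcal P}(v)$ for the end-multiplicity of $v$, so that $\sum_v\epsilon(v)=2|\mathcal P|$, where $|\mathcal P|$ is the number of paths.

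For the bound $\mu(W)\leq t$ I generalize Example~\ref{ex:whirligig}. Each flower $W_i$ has a Hamiltonian path obtained by alternating Hamiltonian paths of the cliques $V_{i1},\dots,V_{i,m_i}$ with the $m_i$ vertices of $U_i$; since a flower has equally many petals and $U_i$-vertices, a parity count forces exactly one endpoint of this path to be a petal vertex and the other to be a vertex of $U_i$, and I may prescribe which ones. As $2t-1$ is odd, I pair the flowers into $t-1$ pairs and one singleton; joining the two $U$-endpoints of each pair by an edge of $K_U$ and appending all of $U_0$ to the singleton's $U$-endpoint yields a cover by $t$ paths.

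The lower bound $\mu(W)\geq t$ is the heart of the argument, and the step I expect to be the main obstacle. I will show that every flower forces a petal endpoint, i.e. $\sum_{v\in\bigsqcup_j V_{ij}}\epsilon(v)\geq 1$ for each $i$. Suppose not, so every petal vertex of $W_i$ has path-degree $2$. Consider the trace $T_i$ of $\mathcal P$ on $W_i$; it is a linear forest in which every petal vertex has degree $2$, so each of its $c\geq 1$ maximal subpaths has both ends in $U_i$. Within each (nonempty) petal the trace forms at least one subpath, and each such subpath sends exactly two edges to $U_i$; hence the number $g$ of petal-subpaths satisfies $g\geq m_i$ and the number of petal-to-$U_i$ edges is $2g$. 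Comparing the total end-multiplicity $2c$ of $T_i$, which is concentrated on $U_i$, with the degree sum on $U_i$ gives $2m_i-2a_i-2g=2c$, where $a_i$ counts the edges of $T_i$ inside $U_i$; thus $c=m_i-a_i-g\leq m_i-g\leq 0$, contradicting $c\geq 1$. Summing the per-flower bound over the $2t-1$ flowers gives $2|\mathcal P|\geq 2t-1$, so $|\mathcal P|\geq t$. With $\mu(W)=t$ and $W$ non-Hamiltonian, Lemma~\ref{l:ami} yields $\MU(W)=t$.

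Finally I will establish maximality by producing a $(t-1)$-path cover of $W+e$ for each non-edge $e$; since an added edge lowers $\mu$ by at most one, this forces $\mu(W+e)=t-1$ and hence $\MU(W+e)<t$. By the symmetry of the construction the non-edges are of three kinds: a petal vertex joined to a petal of a different flower, two petals of the same flower, or a petal joined to a vertex of some $U_{i'}$ with $i'\neq i$ (with $U_0$ handled the same way). In each case the new edge lets me cover the one or two flowers involved by a single path whose two ends both lie in $U$, thereby saving the petal endpoint that those flowers would otherwise force; I then pair the remaining flowers as before, splice the resulting free $U$-ends together along $K_U$, and absorb $U_0$, arriving at $t-1$ paths. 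Combining the four parts shows $W\in\calN_t$.
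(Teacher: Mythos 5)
Your proposal follows the paper's route for the upper bound and for maximality---the same explicit $t$-path cover built from flower paths spliced along $K_U$, and the same three-way classification of non-edges---but it supplies a genuinely new piece where the paper has a hole: the published proof says only ``we leave to the reader the argument that there is no $(t-1)$-path cover,'' whereas you actually prove $\mu(W)\geq t$. Your end-multiplicity count is correct: if no petal vertex of flower $i$ were a path-endpoint, the trace of the cover on that flower would be a linear forest with $c\geq 1$ components whose ends all lie in $U_i$, and the degree count $2m_i-2a_i-2g=2c$ with $g\geq m_i$ forces $c\leq 0$, a contradiction; summing over the $2t-1$ flowers gives $2|\mathcal{P}|\geq 2t-1$, hence $|\mathcal{P}|\geq t$. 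This is the most substantial ingredient missing from the paper's proof. You also verify trimness explicitly, which the paper never does---and which in fact fails for $t=1$, where every vertex of $U_1$ is universal, so $t\geq 2$ should be stated as a hypothesis.

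One step of your maximality sketch is too loose. For a non-edge $e=xu$ with $x$ a petal vertex of flower $i$ and $u\in U_{i'}$, $i'\neq i$, covering ``the one flower involved'' by a path with both ends in $U$, namely $\rv{P}_i\stackrel{e}{\sim}u$, consumes $u$; when $m_{i'}=1$ the petal $V_{i'1}$ is then stranded as its own path and the count stays at $t$. (Covering flowers $i$ and $i'$ together by a single path with both ends in $U$ is impossible, by your own flower lemma applied to flower $i'$, which receives no new edge.) The repair is what the paper implicitly does: route through all of flower $i'$, e.g.\ take $P_{i'}\stackrel{e}{\sim}P_i$ after permuting $U_{i'}$ so that $u$ is the terminal vertex of $P_{i'}$, obtaining one path covering both flowers with exactly one petal endpoint, so the two flowers jointly force one endpoint rather than two; pairing the remaining $2t-3$ flowers and absorbing $U_0$ at a $U$--$U$ transition then yields $t-1$ paths. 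With that adjustment your argument is complete, and stronger than the paper's where it matters most.
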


\begin{proof}
We must show that $W$ is $t$-path covered and not $(t-1)$-path covered, and that the addition of
any edge yields a  $(t-1)$-path covered graph.  The argument is analogous to the one in
Example~\ref{ex:whirligig}.

Let $R$ be a Hamiltonian path in $U_0$.
For each $i=1,\dots,2t-1$ and $j=1,\dots,m_i$ let $Q_{ij}$ be a Hamiltonian path in $K_{V_{ij}}$.  
Let $P_i$ be the path
\[
P_i: Q_{i1} \sim u_{i1} \sim \cdots \sim Q_{im_i} \sim u_{im_i}
\]
and let $\rv{P_i}$  be the reversal of $P_i$.  

Since there is an edge $u_{im_i}u_{jm_j}$ there is a path $P_i\sim \rv{P}_j$ for any $i\ne
j\in \set{1,\dots, 2t-1}$.  Therefore the graph $W$ has a $t$-path covering $P_{2i-1} \sim
\rv{P}_{2i}$ for $i=1,\dots, (t-1)$ ,  
along with $P_{2t-1} \sim R$.  
We leave to the reader the argument that there is no $(t-1)$-path cover.

To show $W$ is maximal we show that after adding an edge $e$, 
we can join two paths in the $t$-path cover above, with a bit of rearrangement.
There are three types of edges to consider, the edge $e$ might join  $V_{ij}$ to
$U_{i'}$ for $i \ne i'$; or  $V_{ij}$ to $V_{ij'}$ for $j\ne j'$; or $V_{ij}$ to $V_{i'j'}$ for
$i\ne i'$.  
Because of the symmetry of $W$, we may assume $i=1$ and $j=1$ and that the vertex chosen
from $V_{ij}$ is the initial vertex of $Q_{ij}$.  
Other simplifications due to symmetry will be evident in what follows.
  
In the first case there are two subcases---determined by $i'\geq 2t$ or  not---and after permutation, 
we may consider the edge $e$ from the initial vertex of $Q_{11}$ to the terminal vertex of $R$, or
to the terminal vertex of $P_{2t-1}$.  
We can then join two paths in the $t$-path cover:  
either $P_{2t-1} \sim R\stackrel{ e}{\sim } P_1\sim \rv{P}_2$ 
or $P_2 \sim  \rv{P}_1\stackrel{e}{ \sim }P_{2t-1}\sim R$.

Suppose next that we join the initial vertex of $Q_{11}$ with the terminal vertex of $Q_{12}$.
We then rearrange $P_1$ and join  two path in the $t$-path cover to get
\[P_{2t-1}\sim R \sim u_{11} \sim Q_{11}\stackrel{e}{\sim} Q_{12} \sim u_{12}\sim \cdots \sim Q_{1m_1}\sim
u_{1m_1}\sim \rv{P}_2 \]

Finally, suppose that we join the initial vertex of $Q_{11}$ with the initial vertex of
$Q_{2t-1,1}$.
Then we rearrange to $\rv{R}\sim\rv{P}_{2t-1}\stackrel{e}{\sim}P_1\sim\rv{P}_2$.

\end{proof}



\end{document}